\newcommand{\p}{\partial}
\newcommand{\id}{\textrm{id}}
\newcommand{\NN}{\mathbb N}
\newcommand{\IN}{\mathbb Z}
\newcommand{\RN}{\mathbb R}
\newcommand{\CN}{\mathbb C}
\newcommand{\TT}{\mathbb T}
\newcommand{\SP}{{\mathbb S}^1}
\numberwithin{equation}{section}
\begin{document}


\theoremstyle{plain}
\newtheorem{thm}{Theorem}[section]
\newtheorem{lem}[thm]{Lemma}
\newtheorem{cor}[thm]{Corollary}
\newtheorem{prop}[thm]{Proposition}
\newtheorem{conj}[thm]{Conjecture}

\theoremstyle{definition}
\newtheorem{defn}[thm]{Definition}

\theoremstyle{remark}
\newtheorem{rmk}[thm]{Remark}
\newtheorem{exam}[thm]{Example}


\title{Escaping orbits are also rare in the almost periodic Fermi-Ulam ping-pong}
\author{Henrik Schließauf
	\thanks{
		Electronic address: \texttt{hschlies@math.uni-koeln.de}
	}
}
\affil{
	Universität zu Köln, Mathematisches Institut, \\
	Weyertal 86-90, 50931 Köln, Germany
}
\date{August 6, 2019}

\maketitle

\begin{abstract}
	\noindent 
	We study the one-dimensional Fermi-Ulam ping-pong problem with a Bohr almost periodic forcing function and show that the set of initial condition leading to escaping orbits typically has Lebesgue measure zero.
\end{abstract}

\section{Introduction}

The Fermi-Ulam ping-pong is a model describing how charged particles bounce off magnetic mirrors and thus gain energy. They undergo the so called Fermi acceleration and one central question is whether the particles velocities can get close to the speed of light that way. The model was introduced by Fermi \cite{Fermi_on_the_origin} in order to explain the origin of high energy cosmic radiation. A common one-dimensional mathematical formulation of this problem is as follows: The point particle bounces completely elastically between two vertical plates of infinite mass, one fixed at $x=0$ and one moving in time as $x=p(t)$ for some forcing function $p=p(t)>0$. The particle alternately hits the walls and experiences no external force in between the collisions. The motion can be described by the successor map $f:(t_0,v_0) \mapsto (t_1,v_1)$, mapping the time $t_0\in \RN$ of an impact at the left plate $x=0$ and the corresponding velocity $v_0>0$ right after the collision to $(t_1,v_1)$, representing the subsequent impact at $x=0$. Since one is interested in the long term behavior, we study the forward iterates $(t_n,v_n) = f^n(t_0,v_0)$ for $n \in \NN$ and in particular the `escaping set'
\begin{equation*}
E = \{ (t_0,v_0) : \lim_{n\to\infty} v_n = \infty \},
\end{equation*}
consisting of initial data, which lead to infinitely fast particles. The most studied case is that of a periodic forcing $p(t)$. Ulam \cite{ulam1961} conjectured an increase in energy with time on the average. Based on some numerical simulations, he however realized that rather large fluctuations and no clear gain in energy seemed to be the typical behavior. Two decades later, the development of KAM theory allowed to prove that the conjecture is indeed false. If the forcing $p$ is sufficiently smooth, all orbits stay bounded in the phase space, since the existence of invariant curves prevents the orbits from escaping \cite{Laederich_1991,Pustylnikov_1983}. The proofs are based on Moser's twist thoerem \cite{Moser}, which relies on the higher regularity. And indeed, Zharnitsky \cite{Zharnitsky_1998} showed the existence of escaping orbits if only continuity is imposed on $p$. In the non-periodic case, one can even find $\mathcal{C}^\infty$-forcings with this behavior \cite{kunze_ortega_complete_orbits}. More recently, Dolgopyat and De Simoi developed a new approach. They consider the periodic case and study some maps which are basically approximations of the successor map $f$. This way they could prove several results regarding the Lebesgue measure of the escaping set $E$ \cite{dolgopyat,Dolgopyat_2008,de_Simoi_dolgo,De_Simoi_2013}. \\
Finally, Zharnitsky \cite{Zharnitsky_invariant_curve} investigated the case of a quasi-periodic forcing function whose frequencies satisfy a Diophantine inequality. Again, using an invariant curve theorem, he was able to show that the velocity of every particle is uniformly bounded in time. Since no such theorem is available if the Diophantine condition is dropped, a different approach is necessary in this case. This was done by Kunze and Ortega in \cite{kunze_ortega_ping_pong}. They apply a refined version of the Poincaré recurrence theorem due to Dolgopyat \cite{Dolgopyat_lectures} to the set of initial condition leading to unbounded orbits, and thereby show that most orbits are recurrent. Thus, typically the escaping set $E$ will have Lebesgue measure zero. Now, in this work we will give an affirmative answer to the question raised in \cite{kunze_ortega_ping_pong} whether this result can be generalized to the almost periodic case. Indeed, most of their arguments translate naturally into the language of Bohr almost periodic functions. Our main theorem (Theorem \ref{thm main theorem almost periodic}) states that the escaping set $E$ is most likely to have measure zero, provided the almost periodic forcing $p$ is sufficiently smooth. \\
In order to explain more precisely what we mean by `most likely', we first need to introduce some properties and notation regarding almost periodic functions. This is done in section \ref{section a p functions}. Subsequently we will study measure-preserving successor maps of a certain type and their iterations. We end this part by stating Theorem \ref{thm escaping set nullmenge}, a slightly generalized version of a theorem by Kunze and Ortega \cite{kunze_ortega_ping_pong}, which describes conditions under which the escaping set typically will have measure zero. This will be the most important tool and its proof will be given in the following section. Then, in the last section we discuss the ping-pong model in more detail and finally state and prove the main theorem.

\section{Almost periodic functions and their representation} \label{section a p functions}
\subsection{Compact topological groups and minimal flows} \label{sec compact top groups}

Let $\Omega$ be a commutative topological group, which is metrizable and compact. We will consider the group operation to be additive. Moreover, suppose there is a continuous homomorphism $\psi:\RN \to \Omega$, such that the image $\psi(\RN)$ is dense in $\Omega$. This function $\psi$ induces a canonical flow on $\Omega$, namely 
\begin{equation*}
\Omega\times\RN\to\Omega, \;\;  \omega \cdot t = \omega + \psi(t).
\end{equation*}
This flow is minimal, since
\begin{equation*}
\overline{\omega \cdot \RN} = \overline{\omega + \psi(\RN)} = \omega + \overline{\psi(\RN)} = \Omega
\end{equation*}
holds for every $\omega \in \Omega$.  Let us also note that in general $\psi$ can be nontrivial and periodic, but this happens if and only if $\Omega \cong \SP$ \cite{Ortega_Tarallo}.\\
Now consider the unit circle $\SP = \{ z \in \CN : \lvert z \rvert = 1\}$ and a continuous homomorphism $\varphi:\Omega\to \SP$. Such functions $\varphi$ are called characters and together with the point wise  product they form a group, the so called dual group $\Omega^*$. Its trivial element is the constant map with value $1$. It is a well known fact that nontrivial characters exist, whenever $\Omega$ is nontrivial \cite{Pontryagin_1966}. Also non-compact groups admit a dual group. Crucial to us will be the fact that
\begin{equation*}
\RN^* = \{ t \mapsto e^{i\alpha t}: \alpha \in \RN \}.
\end{equation*}
Now, for a nontrivial character $\varphi \in \Omega^*$ we define
\begin{equation*}
\Sigma = \ker \varphi = \{ \omega \in \Omega : \varphi(\omega) = 1 \}.
\end{equation*}
Then $\Sigma$ is a compact subgroup of $\Omega$. If in addition $\Omega \ncong \SP$, it can be shown that $\Sigma$ is perfect \cite{Ortega_Tarallo}. This subgroup will act as a global cross section to the flow on $\Omega$. Concerning this, note that since $\varphi \circ \psi$ describes a nontrivial character of $\RN$, there is a unique $\alpha \neq 0$ such that
\begin{equation*}
\varphi(\psi(t))=e^{i \alpha t }
\end{equation*}
for all $t \in \RN$. Therefore, the minimal period of this function,
\begin{equation*}
S = \frac{2\pi}{\lvert \alpha \rvert},
\end{equation*}
can be seen as a returning time on $\Sigma$ in the following sense. If we denote by $\tau(\omega)$ the unique number in $[0,S)$ such that $\varphi(\omega) = e^{i\alpha \tau(\omega)}$, then one has
\begin{equation*}
\varphi(\omega\cdot t)=\varphi(\omega+\psi(t)) = \varphi(\omega)\varphi(\psi(t))= e^{i \alpha \tau(\omega)} e^{i \alpha t}
\end{equation*}
and thus
\begin{equation*}
\omega \cdot t \in \Sigma \Leftrightarrow t \in -\tau(\omega) + S\IN.
\end{equation*}
Also $\tau$ as defined above is a function $\tau:\Omega \to [0,S)$ that is continuous where $\tau(\omega)\neq0$, i.e.\ on $\Omega \setminus \Sigma$.
From this we can derive that the restricted flow
\begin{equation*}
\Phi : \Sigma \times [0,S) \to \Omega, \;\; \Phi(\sigma,t) = \sigma \cdot t,
\end{equation*}
is a continuous bijection. Like $\tau(\omega)$, its inverse
\begin{equation*}
\Phi^{-1}(\omega) = (\omega \cdot (-\tau(\omega)), \tau(\omega))
\end{equation*}
is continuous only on $\Omega\setminus \Sigma$. Therefore, $\Phi$ describes a homeomorphism from $\Sigma \times (0,S)$ to $\Omega \setminus \Sigma$.

\begin{exam} \label{exam torus}
	One important example for such a group $\Omega$ is the $N$-Torus $\TT^N$, where $\TT = \RN / \IN$. We will denote classes in $\TT^N$ by $\bar\theta= \theta + \IN$. Then, the image of the homomorphism
	\begin{equation*}
	\psi(t) = (\overline{\nu_1 t},\ldots,\overline{\nu_N t})
	\end{equation*}
	winds densely around the torus $\TT^N$, whenever the frequency vector $\nu=(\nu_1,\ldots,\nu_N) \in \RN^N$ is nonresonant, i.e.\ rationally independent. It is easy to verify that the dual group of $\TT^N$ is given by
	\begin{equation*}
	(\TT^N)^* = \{(\bar{\theta}_1,\ldots,\bar{\theta}_N) \mapsto e^{2\pi i (k_1\theta_1+\ldots + k_N \theta_N)} : k \in \IN^N \}.
	\end{equation*}
	Therefore, one possible choice for the cross section would be
	\begin{equation*}
	\Sigma = \{ (\bar{\theta}_1,\ldots,\bar{\theta}_N)\in \TT^N : e^{2\pi i \theta_1} = 1  \} = \{0\}\times\TT^{N-1},
	\end{equation*}
	so $\varphi(\bar{\theta}_1,\ldots,\bar{\theta}_N)=e^{2\pi i \theta_1}$.	In this case, consecutive intersections of the flow and $\Sigma$ would be separated by an interval of the length $1/\nu_1$.
		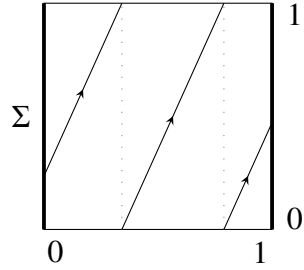
\begin{figure}[h]
		\begin{center}
			
			\begin{tikzpicture}[xscale=3,yscale=3]
			\draw (0,0) rectangle (1,1);
			
			\draw[line width=0.5mm](0,0) -- (0,1);		
			\draw[line width=0.5mm] (1,0) -- (1,1);

			\draw[loosely dotted,gray] (0.34164,0) -- (0.34164,1);		
			\draw[loosely dotted,gray] (0.78885,0) -- (0.78885,1);

			\begin{scope}[>=stealth,decoration={
				markings,
				mark=at position 0.5 with {\arrow{>}}}
			] 
			\draw[postaction={decorate}] (0,0.23606) -- (0.34164,1);
			\draw[postaction={decorate}] (0.34164,0) -- (0.78885,1);
			\draw[postaction={decorate}] (0.78885,0) -- (1,0.47213);
			
			\end{scope}

			\draw (-.1,0.5) node {$\Sigma$};
			
			\draw (0.05,-0.1) node {$0$};
			\draw (0.95,-0.1) node {$1$};
			
			\draw (1.1,0.05) node {$0$};
			\draw (1.1,0.95) node {$1$};

			\end{tikzpicture}
			
		\end{center}
		\caption{On the $2$-torus $\TT^2$, intersections of $\Sigma=\{ 0 \}\times \TT$ and the orbit of $\psi(t)$ are separated by time intervals of length $S=1/\nu_1$.}
		\end{figure}
\end{exam}

\subsection{Almost periodic functions}

The notion of almost periodic functions was introduced by H. Bohr as a generalization of strictly periodic functions \cite{Bohr_1925}. A function $u \in \mathcal{C}(\RN)$ is called \textit{(Bohr) almost periodic}, if for any $\epsilon>0$ there is a relatively dense set of $\epsilon$-almost-periods of this function. By this we mean, that for any $\epsilon>0$ there exists $L=L(\epsilon)$ such that any interval of length $L$ contains at least on number $T$ such that
\begin{equation*}
\lvert u(t+T) - u(t) \rvert < \epsilon \;\; \forall t\in \RN.
\end{equation*}
Later, Bochner \cite{Bochner_1927} gave an alternative but equivalent definition of this property: For a continuous function $u$, denote by $u_\tau(t)$ the translated function $u(t + \tau)$. Then $u$ is (Bohr) almost periodic if and only if every sequence $\left(u_{\tau_n}\right)_{n\in\NN}$ of translations of $u$ has a subsequence that converges uniformly. 

There are several other characterizations of almost periodicity, as well as generalizations due to Stepanov \cite{Stepanoff_1926}, Weyl \cite{Weyl_1927} and Besicovitch \cite{Besicovitch_1926}. In this work we will only consider the notion depicted above and therefore call the corresponding functions just \textit{almost periodic} (\textit{a.p.}). We will however introduce one more way to describe a.p.\ functions using the framework of the previous section: \\
Consider $(\Omega,\psi)$ as above and a function $U \in \mathcal{C}(\Omega)$. Then, the function defined by
\begin{equation} \label{eq representation formula}
u(t)= U(\psi(t))
\end{equation}
is almost periodic. This can be verified easily with the alternative definition due to Bochner. Since $U \in \mathcal{C}(\Omega)$, any sequence $\left(u_{\tau_n}\right)_{n\in\NN}$ will be uniformly bounded and equicontinuous. Hence the Arzelà–Ascoli theorem guarantees the existence of a uniformly convergent subsequence. We will call any function obtainable in this manner \textit{representable over} $(\Omega,\psi)$. Since the image of $\psi$ is assumed to be dense, it is clear that the function $U \in \mathcal{C}(\Omega)$ is uniquely determined by this relation. As an example take $\Omega \cong \SP$, then $\psi$ is periodic. Thus \eqref{eq representation formula} gives rise to periodic functions.
Conversely it is true, that any almost periodic function can be constructed this way. For this purpose we introduce the notion of hull. The hull $\mathcal{H}_u$ of a function $u$ is defined by
\begin{equation*}
\mathcal{H}_u = \overline{\{u_\tau : \tau \in \RN\}},
\end{equation*}
where the closure is taken with respect to uniform convergence on the whole real line. Therefore if $u$ is a.p., then $\mathcal{H}_u$ is a compact metric space. If one uses the continuous extension of the rule
\begin{equation*}
u_\tau * u_s = u_{\tau +s} \;\; \forall \tau,s \in \RN
\end{equation*}
onto all of $\mathcal{H}_u$ as the group operation, then the hull becomes a commutative topological group with neutral element $u$. (For $v,w \in \mathcal{H}_u$ with $v = \lim_{n\to\infty} u_{\tau_n^v}$ and $w = \lim_{n\to\infty} u_{\tau_n^w}$ we have 
\begin{equation*}
v * w = \lim_{n\to\infty} u_{\tau_n^v+\tau_n^v}, \;\; -v = \lim_{n\to\infty} u_{-\tau_n^v}.
\end{equation*} 
These limits exist by Lemma \ref{lem uniform limit} from the appendix. The continuity of both operations can be shown by a similar argument.)
If we further define the flow
\begin{equation*}
\psi_u (\tau) = u_\tau,
\end{equation*}
then the pair $(\mathcal{H}_u,\psi_u)$ matches perfectly the setup of the previous section. Now, the representation formula \eqref{eq representation formula} holds for $U \in \mathcal{C}(\mathcal{H}_u)$ defined by
\begin{equation*}
U(w) = w(0) \;\; \forall w \in \mathcal{H}_u.
\end{equation*}
This function is sometimes called the `extension by continuity' of the almost periodic function $u(t)$ to its hull $\mathcal{H}_u$. This construction is standard in the theory of a.p.\ functions and we refer the reader to \cite{Nemytskii} for a more detailed discussion.\\
\ \\
For a function $U:\Omega \to \RN$ let us introduce the derivative along the flow by 
\begin{equation*}
\p_\psi U(\omega) = \lim_{t \to 0} \frac{U(\omega + \psi(t)) - U(\omega)}{t}.
\end{equation*}
Let $\mathcal{C}^1_\psi(\Omega)$ be the space of continuous functions $U:\Omega \to \RN$ such that $\p_\psi U$ exists for all $\omega \in \Omega$ and $\p_\psi U \in \mathcal{C}(\Omega)$. The spaces $\mathcal{C}^k_\psi(\Omega)$ for $k \geq 2$ are defined accordingly. Let us also introduce the norm $\lVert U \rVert_{\mathcal{C}^k_\psi(\Omega)} = \lVert  U \rVert_\infty + \sum_{n=1}^{k} \lVert \p_\psi^{(n)} U \rVert_\infty$. Now consider $U\in \mathcal{C}(\Omega)$ and assume the almost periodic function $u(t)=U(\psi(t))$ is continuously differentiable. Then $\p_\psi U$ exists on $\psi(\RN)$ and we have
\begin{equation*}
u'(t) = \p_\psi U\left(\psi(t)\right) \;\; \text{ for all } t \in \RN.
\end{equation*}
\begin{lem}
Let $U\in \mathcal{C}(\Omega)$ and $u\in \mathcal{C}(\RN)$ be such that $u(t)=U(\psi(t))$. Then we have $u\in \mathcal{C}^1(\RN)$ and $u'(t)$ is a.p.\ if and only if $U \in \mathcal{C}^1_\psi(\Omega)$.
\end{lem}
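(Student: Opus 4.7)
\bigskip

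\noindent\textbf{Proof plan.} I would split into the two implications; the nontrivial direction is ($\Rightarrow$).

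\medskip

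\noindent\emph{The direction ($\Leftarrow$).} Assume $U \in \mathcal{C}^1_\psi(\Omega)$ and set $v(t) := \p_\psi U(\psi(t))$. Since $\psi$ is a group homomorphism, $\psi(t_0+h) = \psi(t_0) + \psi(h)$, so
\begin{equation*}
\frac{u(t_0+h)-u(t_0)}{h} \;=\; \frac{U(\psi(t_0)+\psi(h)) - U(\psi(t_0))}{h},
\end{equation*}
and the right-hand side tends, by definition of $\p_\psi U$ at $\omega = \psi(t_0)$, to $v(t_0)$ as $h\to 0$. Hence $u'=v$ exists, is continuous (as a representable function over $(\Omega,\psi)$ with continuous representative $\p_\psi U$), and is almost periodic by the discussion around \eqref{eq representation formula}.

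\medskip

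\noindent\emph{The direction ($\Rightarrow$).} Assume $u\in\mathcal{C}^1(\RN)$ with $u'$ a.p. The plan is to build a continuous candidate $V\in\mathcal{C}(\Omega)$ for $\p_\psi U$ by approximating it on the dense set $\psi(\RN)$, and then exploit density plus continuity to transfer uniform convergence from $\RN$ to $\Omega$. Concretely, I would proceed in four steps.

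First, observe that the difference quotients $D_h u(t) := (u(t+h)-u(t))/h$ converge to $u'$ \emph{uniformly} on $\RN$ as $h\to 0$. This uses the standard fact that any almost periodic function is uniformly continuous on $\RN$: given $\epsilon>0$, choose $\delta>0$ such that $|u'(s)-u'(t)|<\epsilon$ whenever $|s-t|<\delta$; then, for $|h|<\delta$, the mean value theorem gives $D_h u(t) = u'(\xi)$ with $|\xi-t|<|h|<\delta$, so $|D_h u(t) - u'(t)|<\epsilon$ for every $t\in\RN$.

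Second, for $h\neq 0$ introduce the continuous function $W_h\in\mathcal{C}(\Omega)$ defined by
\begin{equation*}
W_h(\omega) := \frac{U(\omega + \psi(h)) - U(\omega)}{h},
\end{equation*}
so that $W_h(\psi(t)) = D_h u(t)$ for every $t\in\RN$. From Step~1 the family $(D_h u)$ is uniformly Cauchy on $\RN$, hence $(W_h)$ is uniformly Cauchy on the dense subset $\psi(\RN)\subset\Omega$. Since each $W_h$ is continuous on the compact space $\Omega$, this Cauchy estimate extends from $\psi(\RN)$ to all of $\Omega$, so $(W_h)$ is Cauchy in $(\mathcal{C}(\Omega),\lVert\cdot\rVert_\infty)$ and converges uniformly to some $V\in\mathcal{C}(\Omega)$. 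This density/continuity step is the point that requires a little care and is what I would regard as the main obstacle; the rest is routine.

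Finally, for every $\omega\in\Omega$ we have $\p_\psi U(\omega) = \lim_{h\to 0} W_h(\omega) = V(\omega)$, so $\p_\psi U$ exists on all of $\Omega$ and coincides with the continuous function $V$. Thus $U\in\mathcal{C}^1_\psi(\Omega)$, completing the nontrivial implication.
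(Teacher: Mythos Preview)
Your argument is correct. The $(\Leftarrow)$ direction is immediate, and for $(\Rightarrow)$ your key observation --- that almost periodicity of $u'$ gives uniform continuity, hence uniform convergence of the difference quotients $D_h u$ on $\RN$, which then transfers via density of $\psi(\RN)$ to uniform convergence of the $W_h$ on $\Omega$ --- is exactly what is needed.

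As for comparison: the paper does not actually supply a proof of the nontrivial direction. It merely remarks that ``one part of the equivalence is trivial'' and refers to \cite[Lemma~13]{Ortega_Tarallo} for the other. Your write-up is therefore more self-contained than the paper itself. The argument you give is in fact the natural one and is presumably close to what appears in the cited reference: the only real content is the passage from uniform estimates on the dense orbit $\psi(\RN)$ to all of $\Omega$, which you handle cleanly using continuity of each $W_h$ and compactness.
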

One part of the equivalence is trivial. The proof of the other part can be found in \cite[Lemma 13]{Ortega_Tarallo}. We also note that the derivative $u'(t)$ of an almost periodic function is itself a.p.\ if and only if it is uniformly continuous. This, and many other interesting properties of a.p.\ functions are demonstrated in \cite{Besicovitch_1926}.
\begin{exam}
	Let us continue Example \ref{exam torus}, where $\Omega = \TT^N$. For $U \in \mathcal{C}(\TT^N)$ consider the function
	\begin{equation*}
	u(t) = U(\psi(t)) = U(\overline{\nu_1 t},\ldots,\overline{\nu_N t}).
	\end{equation*}
	Such functions are called \textit{quasi-periodic}. In this case, $\p_\psi$ is just the derivative in the direction of $\nu\in\RN^N$. So if $U$ is in the space $\mathcal{C}^1(\TT^N)$ of functions in $\mathcal{C}^1(\RN^N)$, which are $1$-periodic in each argument, then
	\begin{equation*}
	\p_\psi U = \sum_{i=1}^{N} \nu_i \, \p_{\theta_i} U.
	\end{equation*}
	Note however, that in general $\mathcal{C}^1_\psi(\TT^N)$ is a proper subspace of $\mathcal{C}^1(\TT^N)$.
\end{exam}

\subsection{Haar measure and decomposition along the flow} \label{sec decomp haar measure}

It is a well known fact, that for every compact commutative topological group $\Omega$ there is a unique Borel probability measure $\mu_\Omega$, which is invariant under the group operation, i.e.\ $\mu_\Omega(\mathcal{D}+\omega) = \mu_\Omega(\mathcal{D})$ holds for every Borel set $\mathcal{D}\subset \Omega$ and every $\omega \in \Omega$. This measure is called the \textit{Haar measure} of $\Omega$. (This follows from the existence of the invariant \textit{Haar integral} of $\Omega$ and the Riesz representation theorem. Proofs can be found in \cite{Pontryagin_1966} and \cite{Hewitt}, respectively.) For Example if $\Omega=\SP$ we have
\begin{equation*}
\mu_{\SP}(\mathcal{B}) = \frac{1}{2\pi}\lambda\{t\in[0,2\pi):e^{it}\in\mathcal{B}\},
\end{equation*}
where $\lambda$ is the Lebesgue measure on $\RN$. Let $\psi$, $\Sigma$ and $\Phi$ be as in section \ref{sec compact top groups}. Then $\Phi$ defines a decomposition $\Omega \cong \Sigma \times [0,S)  $ along the flow. Since $\Sigma$ is a subgroup, it has a Haar measure $\mu_\Sigma$ itself. Also the interval $[0,S)$ naturally inherits the probability measure 
\begin{equation*}
\mu_{[0,S)}(I) = \frac{1}{S} \lambda(I).
\end{equation*}
As shown in \cite{Campos_2013}, the restricted flow $\Phi:\Sigma\times[0,S)\to\Omega, \Phi(\sigma,t)= \sigma \cdot t$ also allows for a decomposition of the Haar measure $\mu_\Omega$ along the flow. 
\begin{lem} \label{lem decomp of haar measure}
	The map $\Phi$ is an isomorphism of measure spaces, i.e.
	\begin{equation} \label{eq decomposition haar measure}
	\mu_\Omega(\mathcal{B}) = \frac{1}{S} (\mu_\Sigma \otimes \lambda) (\Phi^{-1}(\mathcal{B}))
	\end{equation}
	holds for every Borel set $\mathcal{B} \subset \Omega$.
\end{lem}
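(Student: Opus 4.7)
The plan is to verify the identity by recognizing the right-hand side as a translation-invariant Borel probability measure on $\Omega$ and then invoking the uniqueness of the Haar measure. Concretely, define
\[
\tilde{\mu}(\mathcal{B}) := \frac{1}{S}\,(\mu_\Sigma \otimes \lambda)(\Phi^{-1}(\mathcal{B}))
\]
for every Borel set $\mathcal{B} \subset \Omega$. Since $\Phi$ is continuous on $\Sigma \times [0,S)$, its inverse is Borel measurable (it is even continuous on the conull set $\Omega \setminus \Sigma$; note that $\Sigma$ is a proper closed subgroup and cannot be open, because otherwise $\Omega/\Sigma$ would be a finite group admitting a faithful character, forcing $\varphi\circ\psi$ to have finite image and contradicting $\alpha \neq 0$, so $\mu_\Omega(\Sigma)=0$). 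Hence $\tilde\mu$ is a well-defined Borel probability measure on $\Omega$, and it remains to show $\tilde\mu(\mathcal{B} + \omega_0) = \tilde\mu(\mathcal{B})$ for every $\omega_0 \in \Omega$.

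Since every $\omega_0 \in \Omega$ can be written uniquely as $\omega_0 = \sigma_0 + \psi(t_0)$ with $\sigma_0 \in \Sigma$ and $t_0 \in [0,S)$, it suffices to check invariance separately for these two types of translation. Invariance under $\sigma_0 \in \Sigma$ is the easy half: the relation $\Phi(\sigma + \sigma_0, t) = \Phi(\sigma,t) + \sigma_0$ shows that $\Phi^{-1}(\mathcal{B} + \sigma_0)$ is obtained from $\Phi^{-1}(\mathcal{B})$ by the shift $(\sigma,t)\mapsto (\sigma+\sigma_0, t)$ of the first coordinate, and this preserves $\mu_\Sigma \otimes \lambda$ by the Haar invariance of $\mu_\Sigma$.

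The slightly more delicate step is invariance under translation by $\psi(t_0)$. Assume $t_0 \in [0,S)$; the general case then follows by iterating. Given $(\sigma,t) \in \Sigma \times [0,S)$, one has $\Phi(\sigma,t) + \psi(t_0) = \sigma + \psi(t+t_0)$, and writing $t+t_0 = kS + t'$ with $k \in \{0,1\}$ and $t' \in [0,S)$, and using $\psi(S) \in \Sigma$ (because $\varphi(\psi(S)) = e^{i\alpha S} = 1$), one finds
\[
\Phi(\sigma,t) + \psi(t_0) = \Phi\bigl(\sigma + k\,\psi(S),\,t'\bigr).
\]
Thus the transformation $T$ induced on $\Sigma \times [0,S)$ acts as the trivial shift $(\sigma, t) \mapsto (\sigma, t+t_0)$ on the subset $\Sigma \times [0,S-t_0)$ and as $(\sigma,t) \mapsto (\sigma + \psi(S),\, t+t_0 - S)$ on $\Sigma \times [S-t_0, S)$. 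Both pieces preserve $\mu_\Sigma \otimes \lambda$: Lebesgue measure on $[0,S)$ is invariant under each of the two partial translations, and $\mu_\Sigma$ is invariant under the Haar shift by $\psi(S) \in \Sigma$. Consequently $\tilde\mu$ is invariant under translation by $\psi(t_0)$.

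Combining the two invariances gives full $\Omega$-invariance of $\tilde\mu$, and by uniqueness of the Haar probability measure on the compact commutative group $\Omega$, we conclude $\tilde\mu = \mu_\Omega$, which is precisely \eqref{eq decomposition haar measure}. I expect the main obstacle to be the careful bookkeeping of the wrap-around behaviour in step three — namely the need to split the domain into the two pieces on which $T$ acts differently and to confirm that the compensating shift $\psi(S) \in \Sigma$ leaves $\mu_\Sigma$ invariant — but beyond this, the argument is essentially a direct Haar-uniqueness computation.
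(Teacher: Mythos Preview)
Your proof is correct and follows essentially the same strategy as the paper: define the right-hand side as a Borel probability measure, verify translation invariance, and invoke uniqueness of Haar measure. The only cosmetic difference is that the paper packages the wrap-around bookkeeping into an auxiliary map $\chi(\sigma,t)=\Phi^{-1}(\sigma\cdot t)$ and the identity $\chi(\Phi^{-1}(\omega)+\Phi^{-1}(\tilde\omega))=\Phi^{-1}(\omega+\tilde\omega)$, handling all $\omega_0$ at once, whereas you decompose $\omega_0=\sigma_0+\psi(t_0)$ and treat the two pieces separately with an explicit two-piece description of the induced map on $\Sigma\times[0,S)$; both arguments rest on exactly the same observation that $\psi(S)\in\Sigma$ and that the resulting shifts preserve $\mu_\Sigma\otimes\lambda$.
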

Before we prove this lemma, let us begin with some preliminaries. Consider the function $\chi: \Sigma\times [0,\infty)  \to \Sigma\times [0,S)$ defined by
\begin{equation} \label{eq def chi}
\chi(\sigma,t) = \Phi^{-1}(\sigma \cdot t) = \Phi^{-1}(\sigma +\psi(t)).
\end{equation}
Since $\Phi$ is just the restricted flow, we have $\chi=\id$ on $\Sigma\times[0,S)$. This yields
\begin{equation*}
\chi(\sigma,t) = \Phi^{-1}(\sigma+ \psi(t)) 
= \Phi^{-1}\left(\sigma+ \psi\left(\left\lfloor \frac{t}{S}\right\rfloor S \right)  + \psi\left(t-\left\lfloor\frac{s}{S}\right\rfloor S\right)  \right)
= \left(\sigma + \psi\left(\left\lfloor \frac{t}{S}\right\rfloor S \right), t-\left\lfloor\frac{t}{S}\right\rfloor S \right)
\end{equation*}
for every $(\sigma,t)\in\Sigma\times\RN$, where $\lfloor\cdot\rfloor$ indicates the floor function. This representation shows that $\chi$ is measure-preserving on every strip $ \Sigma\times [t,t+S)$ of width $S$, since $\mu_{\Sigma}$ and $\lambda$ are invariant under translations in $\Sigma$ and $\RN$, respectively.
Moreover, the equality
\begin{equation} \label{eq ident chi phi}
\chi(\Phi^{-1}(\omega) + \Phi^{-1}(\tilde\omega)) = \Phi^{-1}(\omega + \tilde{\omega}) \;\; \forall \omega,\tilde{\omega}\in \Omega
\end{equation}
follows directly from the definition of $\chi$.
	
	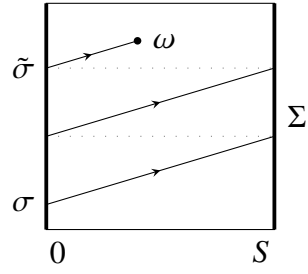
\begin{figure}[h]
\begin{center}
	
	\begin{tikzpicture}[xscale=3,yscale=3]

	\draw[loosely dotted,gray] (0,0.41209) -- (1,0.41209);
	\draw[loosely dotted,gray] (0,0.7136) -- (1,0.7136);
	
	\begin{scope}[acteur/.style={circle, fill=black,thin, inner sep=1pt, minimum size=0.1cm}]
	\node (a1) at (0.4,0.83421) [acteur,label=right:$\omega$]{};
	\end{scope}
	
	\node[acteur/.style={circle, fill=black,thin, inner sep=0pt, minimum size=0.01cm}] (a2) at (0,0.11058) [acteur,label=left:$\sigma$]{};
	\node[acteur/.style={circle, fill=black,thin, inner sep=0pt, minimum size=0.01cm}] (a2) at (0,0.7136) [acteur,label=left:$\tilde\sigma$]{};

	\draw (0.05,-0.1) node {$0$};
	\draw (0.95,-0.1) node {$S$};
	
	
	\draw (1.1,0.5) node {$\Sigma$};

	\begin{scope}[>=stealth,decoration={
		markings,
		mark=at position 0.5 with {\arrow{>}}}
	] 
	\draw[postaction={decorate}] (0,0.11058) -- (1,0.41209);
	\draw[postaction={decorate}] (0,0.41209) -- (1,0.7136);
	\end{scope}
	
	\begin{scope}[>=stealth,decoration={
		markings,
		mark=at position 0.5 with {\arrow{>}}}
	] 
	\draw[postaction={decorate}] (0,0.7136) -- (0.4,0.83421);
	\end{scope}

	\draw (0,0) rectangle (1,1);
	
	\draw[line width=0.5mm](0,0) -- (0,1);		
	\draw[line width=0.5mm] (1,0) -- (1,1);
	

	

	
	%
	%
	%
	%
	%
	
	\end{tikzpicture}

\end{center}		
	\caption{Let $\chi(\sigma,t)=(\tilde{\sigma},s)$. The map $\chi$ `divides out' every complete period of $\varphi\circ \psi$, i.e.\ $s = t\mod S$, while preserving the relation $\tilde{\sigma}\cdot s = \omega = \sigma\cdot t$.}

	\end{figure}

\begin{proof}[Proof of Lemma \ref{lem decomp of haar measure}]
	First we show that $\Phi^{-1}$ is Borel measurable. To prove this, it suffices to show that the image $\Phi(A\times I)$ of every open rectangle $A\times I\subset \Sigma \times [0,S)$ is a Borel set. If $0 \notin I$ this image is open in $\Omega\setminus\Sigma$, since $\Phi^{-1}$ is continuous. But if $0 \in I$, again $\Phi(A\times (I\setminus\{0\}))$ is open and $\Phi(A\times \{0\}) = A$ is it as well.\\
	Now, consider the measure $\mu_\Phi$ on $\Omega$ defined by
		\begin{equation} 
		\mu_\Phi(\mathcal{B}) = \frac{1}{S} (\mu_\Sigma \otimes \lambda) (\Phi^{-1}(\mathcal{B})).
		\end{equation}
	Since $\mu_\Phi(\Omega)=1$, this is a Borel probability measure. We will show that $\mu_\Phi$ is also invariant under addition in the group. For this purpose, let $\mathcal{B} \subset \Omega$ be a Borel set and let $\omega_0 \in \Omega$. Then, by \eqref{eq ident chi phi} we have
		\begin{align}
		\mu_\Phi(\mathcal{B}+\omega_0) 
		= \frac{1}{S} (\mu_\Sigma \otimes \lambda) (\Phi^{-1}(\mathcal{B}+\omega_0))\\
		= \frac{1}{S} (\mu_\Sigma \otimes \lambda) \left(\chi(\Phi^{-1}(\mathcal{B}) + \Phi^{-1}(\omega_0))\right).
		\end{align}	
	 Denoting $\Phi^{-1}(\omega_0)=(\sigma_0,s_0)$, we get $\Phi^{-1}(\mathcal{B}) + \Phi^{-1}(\omega_0)\subset \Sigma\times[s_0,s_0+S)$. So it is contained in a strip of width $S$ and therefore
	 \begin{align*}
		\frac{1}{S} (\mu_\Sigma \otimes \lambda) \left(\chi(\Phi^{-1}(\mathcal{B}) + (\sigma_0,s_0))\right)
		= \frac{1}{S} (\mu_\Sigma \otimes \lambda) \left(\Phi^{-1}(\mathcal{B}) + (\sigma_0,s_0)\right)
	 \end{align*}
	But the product measure $\mu_\Sigma \otimes \lambda$ is invariant under translations in $\Sigma\times \RN$. Thus, in total we have
	\begin{align}
	\mu_\Phi(\mathcal{B}+\omega_0) 
	= \frac{1}{S} (\mu_\Sigma \otimes \lambda) \left(\Phi^{-1}(\mathcal{B})\right)
	=\mu_\Phi(\mathcal{B}) .
	\end{align}
	Therefore, $\mu_\Phi$ is a Borel probability measure on $\Omega$ which is invariant under group action. Since the Haar measure is unique, it follows $\mu_\Omega = \mu_\Phi$.
\end{proof}

\section{A theorem about escaping sets} \label{sec a theorem about escaping sets}
\subsection{Measure-preserving embeddings}

From now on we will consider functions
\begin{equation*}
f:\mathcal{D}\subset \Omega\times(0,\infty) \to \Omega\times(0,\infty),
\end{equation*}
where $\mathcal{D}$ is an open set. We will call such a function \textit{measure-preserving embedding}, if $f$ is continuous, injective and furthermore
\begin{equation*}
(\mu_{\Omega} \otimes \lambda) (f(\mathcal{B}))= (\mu_{\Omega} \otimes \lambda) (\mathcal{B})
\end{equation*}
holds for all Borel sets $\mathcal{B}\subset \mathcal{D}$, where $\lambda$ denotes the Lebesgue measure of $\RN$. It is easy to show that under these conditions, $f:\mathcal{D} \to \tilde{\mathcal{D}}$ is a homeomorphism, where $\tilde{\mathcal{D}}=f(\mathcal{D})$. \\
Since we want to use the iterations of $f$, we have to carefully construct a suitable domain on which these forward iterations are well-defined. We initialize $\mathcal{D}_1 = \mathcal{D}, \;\; f^1=f$ and set
\begin{equation*}
\mathcal{D}_{n+1}=f^{-1}(\mathcal{D}_{n}), \;\; f^{n+1}= f^n \circ f \; \text{ for  }  \; n \in \NN.
\end{equation*}
This way $f^n$ is well-defined on $\mathcal{D}_n$. Clearly, $f^n$ is a measure-preserving embedding as well. Also inductively it can be shown that $\mathcal{D}_{n+1}=\{(\omega,r)\in\mathcal{D}:f(\omega,r),\ldots,f^n(\omega,r)\in \mathcal{D} \}$
and therefore $\mathcal{D}_{n+1}\subset \mathcal{D}_n \subset \mathcal{D}$ for all $n \in \NN$.
Initial conditions in the set
\begin{equation*}
\mathcal{D}_\infty = \bigcap\limits_{n=1}^\infty \mathcal{D}_n \subset \Omega\times(0,\infty)
\end{equation*}
correspond to complete forward orbits, i.e.\ if $(\omega_0,r_0)\in \mathcal{D}_\infty$, then
\begin{equation*}
(\omega_n,r_n)=f^n(\omega_0,r_0)
\end{equation*}
is defined for all $n \in \NN$. It could however happen that $\mathcal{D}_\infty = \emptyset$ or even $\mathcal{D}_{n} = \emptyset$ for some $n\geq 2$. The set of initial data leading to unbounded orbits is denoted by
\begin{equation} \label{eq U unbounded orbits}
\mathcal{U} = \{ (\omega_0,r_0)\in \mathcal{D}_\infty : \limsup_{n\to\infty} r_n = \infty \}.
\end{equation}
Complete orbits such that $\lim_{n\to\infty} r_n = \infty$ will be called \textit{escaping orbits}. The corresponding set of initial data is
\begin{equation*}
\mathcal{E} = \{ (\omega_0,r_0)\in \mathcal{D}_\infty : \lim_{n\to\infty} r_n = \infty \}.
\end{equation*}

\subsection{Almost periodic successor maps} \label{sec a p successor maps}

Now, consider a measure-preserving embedding $f:\mathcal{D}\subset \Omega \times(0,\infty) \to \Omega\times(0,\infty)$, which has the special structure
\begin{equation} \label{embedding form}
f(\omega,r)=(\omega+\psi(F(\omega,r)),r + G(\omega,r)),
\end{equation}
where $F,G:\mathcal{D}\to \RN$ are continuous. For $\omega \in \Omega$ we introduce the notation $\psi_\omega(t) = \omega + \psi(t) = \omega \cdot t$ and define
\begin{equation*}
{D}_{\omega} = (\psi_{\omega} \times \id)^{-1}(\mathcal{D}) \subset \RN \times (0,\infty).
\end{equation*}
On this open set, consider the map $f_{\omega}: {D}_{\omega} \subset \RN \times (0,\infty) \to  \RN \times (0,\infty)$ given by
\begin{equation} \label{planar maps}
f_{\omega}(t,r)=(t+F(\psi_\omega(t),r),r+G(\psi_\omega(t),r)).
\end{equation}
Then $f_{\omega}$ is continuous and meets the identity
\begin{equation*}
f \circ (\psi_{\omega} \times \id) = (\psi_{\omega} \times \id) \circ f_{\omega} \;\; \text{on} \;\; D_{\omega},
\end{equation*} 
i.e.\ the following diagram is commutative:
\begin{equation*}
\begin{tikzcd}
\mathcal{D} \arrow[r, "f"] 
& f(\mathcal{D}) \subset \TT^N\times(0,\infty)  \\
{D}_{\omega} \arrow[r, "f_{\omega}"] \arrow[u, "\psi_{\omega} \times \id"]
& f_{\omega}( {D}_{\omega}) \subset \RN \times (0,\infty) \arrow[u, "\psi_{\omega} \times \id"]
\end{tikzcd}
\end{equation*}
Therefore $f_{\omega}$ is injective as well. Again we define $D_{\omega,1} = D_{\omega}$ and $D_{\omega,n+1} = f_{\omega}^{-1}(D_{\omega,n})$ to construct the set
\begin{equation*}
D_{\omega, \infty} = \bigcap\limits_{n=1}^{\infty} D_{\omega,n} \subset \RN \times (0,\infty),
\end{equation*}
where the forward iterates $(t_n,r_n)=f_{\omega}^n(t_0,t_0)$ are defined for all $n\in\NN$. Analogously, unbounded orbits are generated by initial conditions in the set
\begin{equation*}
U_\omega = \{(t_0,r_0)\in{D}_{{\omega},\infty}: \limsup_{n\to\infty}r_n=\infty  \}
\end{equation*}
and escaping orbits originate in 
\begin{equation*}
{E_{\omega}}= \{(t_0,r_0)\in{D}_{{\omega},\infty}: \lim_{n\to\infty}r_n=\infty  \}.
\end{equation*}
These sets can also be obtained through the relations
\begin{align*}
D_{\omega, \infty} =  (\psi_{\omega} \times \id)^{-1}(\mathcal{D}_\infty), \;\; U_{\omega} =  (\psi_{\omega} \times \id)^{-1}(\mathcal{U}), \;\; E_{\omega} =  (\psi_{\omega} \times \id)^{-1}(\mathcal{E}).
\end{align*}
Finally we are in position to state the theorem \cite[Theorem~3.1]{kunze_ortega_ping_pong}:
\begin{thm} \label{thm escaping set nullmenge}
	Let $f:\mathcal{D}\subset \Omega\times (0,\infty) \to \Omega \times (0,\infty)$ be a measure-preserving embedding of the form (\ref{embedding form}) and suppose that there is a function $W=W(\omega,r)$ satisfying $W\in \mathcal{C}^1_\psi(\Omega \times (0,\infty))$,
	\begin{equation} \label{W growth}
	0<\beta \leq \p_r W(\omega,r) \leq \delta \;\; \text{for} \;\; \omega \in \Omega, \;\; r \in (0,\infty),
	\end{equation}
	with some constants $\beta,\delta>0$, and furthermore
	\begin{equation} \label{ineq adiabatic inv}
	W(f(\omega,r)) \leq W(\omega,r) + k(r) \;\; \text{for} \;\; (\omega,r)\in \mathcal{D},
	\end{equation}
	where $k:(0,\infty) \to \RN$ is a decreasing and bounded function such that $\lim_{r\to\infty} k(r)=0$. Then, for almost all $\omega \in \Omega$, the set $E_{\omega}\subset \RN \times (0,\infty)$ has Lebesgue measure zero.
\end{thm}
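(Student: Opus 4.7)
My plan closely follows the strategy of Kunze--Ortega \cite{kunze_ortega_ping_pong}, adapted from the quasi-periodic torus to the general compact group $\Omega$: a refined Poincar\'e recurrence argument in the ``vertical'' picture on $\Omega\times(0,\infty)$, followed by a Fubini step that passes to the planar sections $E_\omega$.

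\textbf{Step 1: Reduction to $(\mu_\Omega\otimes\lambda)(\mathcal{E})=0$.} Assuming the stronger claim $(\mu_\Omega\otimes\lambda)(\mathcal{E})=0$, I would deduce that $\lambda^2(E_\omega)=0$ for $\mu_\Omega$-a.e.\ $\omega$. Using Tonelli together with the translation invariance of the Haar measure,
\begin{equation*}
\int_\Omega \lambda^2(E_\omega)\, d\mu_\Omega(\omega)
= \int_\RN\int_0^\infty \int_\Omega \mathbbm{1}_{\mathcal{E}}(\omega\cdot t,r)\, d\mu_\Omega(\omega)\, dr\, dt
= \int_\RN (\mu_\Omega\otimes\lambda)(\mathcal{E})\, dt = 0,
\end{equation*}
which forces $\lambda^2(E_\omega)=0$ for $\mu_\Omega$-a.e.\ $\omega$.

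\textbf{Step 2: Finite-measure sublevels.} Let $\mathcal{A}_c=\{(\omega,r)\in\Omega\times(0,\infty): W(\omega,r)\leq c\}$. From $\partial_r W\geq\beta>0$ and the boundedness of $W(\cdot,1)$ on the compact group $\Omega$, each vertical slice of $\mathcal{A}_c$ is an $r$-interval of length bounded linearly in $c$, so $(\mu_\Omega\otimes\lambda)(\mathcal{A}_c)<\infty$. Along any escape orbit one also has $W(\omega_n,r_n)\geq \beta r_n - C \to \infty$, so escape orbits eventually leave every $\mathcal{A}_c$ for good.

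\textbf{Step 3: Recurrence.} The dynamical heart of the argument is the refined Poincar\'e recurrence theorem of Dolgopyat \cite{Dolgopyat_lectures}: the measure-preservation of $f$ combined with the near-invariance $W(f(\omega,r))\leq W(\omega,r)+k(r)$, $k(r)\downarrow 0$, implies that for $(\mu_\Omega\otimes\lambda)$-a.e.\ $(\omega_0,r_0)\in\mathcal{D}_\infty$ the forward orbit satisfies $\liminf_n W(f^n(\omega_0,r_0))<\infty$. Escape orbits have $W_n\to\infty$ and therefore lie in the exceptional null set; hence $(\mu_\Omega\otimes\lambda)(\mathcal{E})=0$, and Step~1 yields the theorem.

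\textbf{Main obstacle.} The genuine difficulty is Step 3. The standard Poincar\'e recurrence theorem is useless here, since $\mu_\Omega\otimes\lambda$ is infinite on $\Omega\times(0,\infty)$ and $f$ is only a partial injection, so \emph{a priori} nothing rules out wandering behaviour --- think of $x\mapsto x+1$ on $(\RN,\lambda)$. The purpose of the adiabatic hypothesis $W(fx)\leq W(x)+k(r)$ with $k(r)\downarrow 0$ is precisely to forbid such wandering at large $r$: the ``outflow'' $\partial^+\mathcal{A}_c = \{x\in \mathcal{A}_c: f(x)\notin\mathcal{A}_c\}$ of one iterate of $f$ is contained in the thin shell $\{c-k(r)<W\leq c\}$, whose measure tends to zero as $c\to\infty$ thanks to $\partial_r W\geq\beta$. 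Turning this local smallness into the a.e.\ recurrence $\liminf_n W_n<\infty$ is the quantitative core of Dolgopyat's argument, which I would either invoke verbatim or retrace along the lines of Kunze--Ortega. The other steps are essentially bookkeeping.
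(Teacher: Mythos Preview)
Your proposal is correct, and the recurrence core (your Step~3) is the same as the paper's Step~1: restrict to the set $\mathcal{U}$ of unbounded orbits, then apply Dolgopyat's refined recurrence (Lemma~\ref{lem dolgopyat}) via a finite-measure set that every unbounded orbit must hit. The paper makes this set explicit as a union of thin $W$-shells $\mathcal{A}_j=\{|W-W_j|\le\epsilon_j\}$ with $\sum\epsilon_j<\infty$ (Lemma~\ref{lem finite measure set}), whereas you only sketch an outflow heuristic; but you correctly flag that you would retrace Kunze--Ortega here, and that is exactly what the paper does.

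The genuine difference is your Step~1. The paper passes from $(\mu_\Omega\otimes\lambda)(\mathcal{E})=0$ to $\lambda^2(E_\omega)=0$ a.e.\ by choosing a character $\varphi$, building the cross-section $\Sigma=\ker\varphi$, using the decomposition $\Phi:\Sigma\times[0,S)\to\Omega$ of Lemma~\ref{lem decomp of haar measure} to get the result first for $\mu_\Sigma$-a.e.\ $\sigma\in\Sigma$ via Fubini, and then propagating along the flow using $\lambda^2(E_{\omega\cdot s})=\lambda^2(E_\omega)$. Your one-line Tonelli computation exploiting the translation invariance of $\mu_\Omega$ bypasses all of this machinery and is strictly simpler. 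What the paper's route buys is that it makes the role of the cross-section $\Sigma$ explicit, which is reused elsewhere (e.g.\ in proving that $f$ itself is measure-preserving in the application); your argument is more self-contained for the theorem at hand but gives no information about how the exceptional set of $\omega$'s sits relative to the flow.
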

Here, $\mathcal{C}^1_\psi(\Omega \times (0,\infty))$ denotes the space of functions $U(\omega,r)$ such that $U(\cdot,r)\in \mathcal{C}^1_\psi(\Omega)$ and $U(\omega,\cdot)\in \mathcal{C}^1(0,\infty)$ for every $(\omega,r)\in \Omega\times \RN$.
The function $W$ can be seen as a generalized adiabatic invariant, since any growth will be slow for large energies.

\section{Proof of Theorem \ref{thm escaping set nullmenge}}

The proof of Theorem \ref{thm escaping set nullmenge} is based on the fact, that almost all unbounded orbits of $f$ are recurrent. In order to show this, we will apply the Poincaré recurrence theorem to the set $\mathcal{U}$ of unbounded orbits and the corresponding restricted map $f\big|_\mathcal{U}$. We will use it in the following form \cite[Lemma 4.2]{kunze_ortega_ping_pong}.
\begin{lem} \label{lem poincare recurrence theorem}
	Let $(X,\mathcal{F},\mu)$ be a measure space such that $\mu(X)<\infty$. Suppose that there exists a measurable set $\Gamma\subset X$ of measure zero and a map $T:X\setminus \Gamma\to X$ which is injective and so that the following holds:
	\begin{enumerate} [label=(\alph*)]
		\item $T$ is measurable, in the sense $T(B),T^{-1}(B) \in \mathcal{F}$ for $B\in\mathcal{F}$, and
		\item $T$ is measure-preserving, in the sense that $\mu(T(B))=\mu(B)$ for $B\in\mathcal{F}$. 
	\end{enumerate}
	Then for every measurable set $B\subset X$ almost all points of $B$ visit $B$ infinitely many times in the future (i.e.\ $T$ is infinitely recurrent).
\end{lem}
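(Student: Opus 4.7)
The plan is to adapt the classical Poincaré recurrence argument to the slightly non-standard setting where $T$ is only partially defined on $X\setminus\Gamma$ and where measure preservation is formulated for images rather than preimages. Given a measurable $B$, I would introduce the ``never-return'' set
\[
A = \{x\in B : T^k(x)\notin B \text{ for all } k\ge 1\}
\]
and reduce the lemma to the single claim $\mu(A)=0$: every $x\in B$ that meets $B$ only finitely often has a last return at some time $n\ge 0$ (with $n=0$ corresponding to never returning), whence $T^n(x)\in A$, so
\[
\{x\in B: x \text{ visits } B \text{ finitely often}\} \subset \bigcup_{n\ge 0} T^{-n}(A).
\]

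Before iterating $T$ indefinitely, I would note that the countable union $\Gamma_\infty=\bigcup_{n\ge 0} T^{-n}(\Gamma)$ still has measure zero (this uses the preservation identity discussed below applied to $\Gamma$), so there is a full-measure subset of $X$ on which all forward orbits are well defined; everything from here on is understood modulo this null set, and measurability of $A$ and of the preimages $T^{-k}(A)$ follows by iterating hypothesis~(a).

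The heart of the argument is a disjoint-packing step. First, the preimages $T^{-k}(A)$ for $k\ge 0$ are pairwise disjoint: if $T^i(y),T^j(y)\in A$ with $i<j$, then $T^j(y)=T^{j-i}(T^i(y))\in A\subset B$ contradicts the defining property of $T^i(y)\in A$. Second, I need $\mu(T^{-k}(A))=\mu(A)$ for every $k$, and this is what I expect to be the main technical obstacle, because hypothesis~(b) only gives $\mu(T(\cdot))=\mu(\cdot)$. To bridge this, I would first apply (b) with $X\setminus\Gamma$ to conclude that $T(X\setminus\Gamma)$ has full measure in $X$; combined with injectivity of $T$ this yields $T(T^{-1}(A))=A\cap T(X\setminus\Gamma)$, a set of the same measure as $A$, and applying (b) once more gives $\mu(T^{-1}(A))=\mu(A)$. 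Induction then delivers $\mu(T^{-k}(A))=\mu(A)$ for all $k\ge 0$.

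Since $\sum_{k\ge 0}\mu(T^{-k}(A))\le \mu(X)<\infty$ while all the summands are equal, this forces $\mu(A)=0$. Countable subadditivity together with the inclusion above then gives $\mu(B\setminus B_\infty)=0$, where $B_\infty$ denotes the subset of $B$ returning to $B$ infinitely often. The only point I expect to require genuine care in a full write-up is the bookkeeping around $\Gamma_\infty$, so that every preimage and every image appearing in the identity $\mu(T^{-1}A)=\mu(A)$ refers to a legitimately defined set; once that is arranged, the rest is the standard disjoint-packing proof of Poincaré recurrence, modified only by the short detour through the image measure.
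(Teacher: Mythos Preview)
The paper does not actually contain a proof of this lemma: it is stated with a citation to \cite[Lemma 4.2]{kunze_ortega_ping_pong}, and the only proof given in this section is that of the subsequent Lemma~\ref{lem dolgopyat} (Dolgopyat's refinement), which \emph{uses} Lemma~\ref{lem poincare recurrence theorem} as a black box. So there is no ``paper's own proof'' to compare against.

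That said, your argument is correct and is exactly the standard proof of Poincar\'e recurrence, with the necessary bookkeeping for this formulation. The two points specific to the present setting --- that all forward orbits are defined off the null set $\Gamma_\infty=\bigcup_{n\ge 0}T^{-n}(\Gamma)$, and that the image-form of measure preservation in~(b) implies $\mu(T^{-1}A)=\mu(A)$ via $T(T^{-1}A)=A\cap T(X\setminus\Gamma)$ together with $\mu(T(X\setminus\Gamma))=\mu(X)$ --- are handled correctly. The disjointness of the $T^{-k}(A)$ and the packing conclusion are the classical steps and are fine as stated.
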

Since we can not guarantee that $\mathcal{U}$ has finite measure, we will also need the following refined version of the recurrence theorem due to Dolgopyat \cite[Lemma 4.3]{Dolgopyat_lectures}.
\begin{lem} \label{lem dolgopyat}
	Let $(X,\mathcal{F},\mu)$ be a measure space and suppose that the map $T:X\to X$ is injective and such that the following holds:
	\begin{enumerate} [label=(\alph*)]
		\item $T$ is measurable, in the sense $T(B),T^{-1}(B) \in \mathcal{F}$ for $B\in\mathcal{F}$,
		\item $T$ is measure-preserving, in the sense that $\mu(T(B))=\mu(B)$ for $B\in\mathcal{F}$, and
		\item there is a set $A\in\mathcal{F}$ such that $\mu(A)<\infty$ with the property that almost all points from $X$ visit $A$ in the future.
	\end{enumerate}
	Then for every measurable set $B\subset X$ almost all points of $B$ visit $B$ infinitely many times in the future (i.e.\ $T$ is infinitely recurrent).
\end{lem}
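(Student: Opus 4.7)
The plan is to reduce Lemma \ref{lem dolgopyat} to the standard Poincaré recurrence theorem (Lemma \ref{lem poincare recurrence theorem}), whose hypothesis demands finite measure. The existence of a finite-measure set $A$ that almost every orbit eventually hits makes this possible via the first-return construction.

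First, I would define the first hitting time $\tau(x)=\min\{n\geq 0:T^n x\in A\}$ on $X$ and the first return time $r(x)=\min\{n\geq 1:T^n x\in A\}$ on $A$, each with value $+\infty$ if no such $n$ exists. By hypothesis (c), $\tau$ is finite almost everywhere. A short observation using (a) and (b) shows that $T^{-1}$ preserves conullness: if $V^c$ is null, then $T(T^{-1}(V^c))\subset V^c$ has measure zero, so by (b) applied to $B=T^{-1}(V^c)$, $T^{-1}(V^c)$ is null as well. Applied to $V=\{\tau<\infty\}$, this forces $r<\infty$ almost everywhere on $A$. Then I would set $T_A(x)=T^{r(x)}(x)$ on the conull subset of $A$ where $r$ is finite. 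Using the partition $A=\bigsqcup_{k\geq 1}\{r=k\}$ into measurable pieces (since $\{r=k\}=A\cap T^{-k}(A)\setminus\bigcup_{j=1}^{k-1}T^{-j}(A)$), together with the injectivity and measure-preservation of $T^k$, I would verify in the standard way that $T_A$ is injective, measurable, and measure-preserving with respect to $\mu|_A$. Since $\mu(A)<\infty$, Lemma \ref{lem poincare recurrence theorem} applies to $(A,\mathcal{F}|_A,\mu|_A,T_A)$, giving infinite recurrence of $T_A$ on $A$.

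The second step is to transfer this recurrence back to $T$ on an arbitrary measurable $B\subset X$. I would partition $B$ according to hitting time: $B=\bigsqcup_{k\geq 0}B_k$ with $B_k=B\cap\{\tau=k\}$, which is legitimate modulo a null set by (c). For each $k$ set $B'_k=T^k(B_k)\subset A$; measurability of $B'_k$ and the equality $\mu(B'_k)=\mu(B_k)$ follow from (a), (b), and the injectivity of $T^k$. Applying infinite recurrence of $T_A$ to $B'_k\subset A$, almost every $y\in B'_k$ admits return times $m_1<m_2<\ldots$ with $m_i\geq 1$ and $T^{m_i}(y)\in B'_k$. Writing such a $y$ as $y=T^k(x)$ for the unique $x\in B_k$ (injectivity of $T^k$), we get $T^{m_i+k}(x)=T^{m_i}(y)\in T^k(B_k)$, so $T^{m_i}(x)\in B_k\subset B$ by injectivity of $T^k$ once more. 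Hence almost every $x\in B_k$ revisits $B$ infinitely often, and summing over $k$ yields the claim.

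I expect the main technical nuisance to be the verification that $T_A$ is measure-preserving on $A$: one writes $T_A^{-1}(E)=\bigsqcup_{k\geq 1}(\{r=k\}\cap T^{-k}(E))$ for measurable $E\subset A$, and then uses countable additivity together with the fact that for $F_k\subset\{r=k\}$ measurable, $\mu(F_k)=\mu(T^k(F_k))$ by iterating hypothesis (b). The surjectivity of $T_A$ onto a conull subset of $A$ (needed for injectivity of $T_A$ to translate to measure-preservation in the sense of (b)) follows similarly from a decomposition of $A$ according to the position of the previous $A$-visit. Once this bookkeeping is settled, the rest of the argument is essentially functorial.
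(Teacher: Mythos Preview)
Your argument is correct and follows essentially the same route as the paper: construct the first-return map to $A$, invoke Lemma~\ref{lem poincare recurrence theorem} on the finite-measure space $(A,\mu|_A)$, and transfer the recurrence back to an arbitrary $B$ via the hitting map. The only organizational difference is that you partition $B$ by the exact hitting time $\{\tau=k\}$ and use injectivity of $T^k$ directly, whereas the paper works with the cumulative sets $B_j=\{y\in B:r(y)\le j\}$ and their images $A_j=S(B_j)$; both bookkeeping schemes encode the same idea, and your transfer step is if anything a little tidier.
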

For the sake of completeness let us state the proof.
\begin{proof}[Proof of Lemma \ref{lem dolgopyat}]
	Let $\Gamma \subset X$ be measurable such that $\mu(\Gamma)=0$ and all points of $X\setminus \Gamma$ vist $A$ in the future. Thus, the first return time $r(x)=\min \{ k\in\NN : T^k(x)\in A \}$ is well-defined for $x \in X \setminus \Gamma$. It induces a map $S:X \setminus \Gamma \to A$ defined by $S(x) = T^{r(x)}(x)$. The restriction $S\big|_{A\setminus \Gamma}$ is injective: Assume $S(x)=S(y)$ for distinct points $x,y\in A\setminus \Gamma$ and suppose $r(x)>r(y)$, then $T^{r(x)-r(y)}(x) = y \in A$ is a contradiction to the minimality of $r(x)$. It is also measure-preserving \cite[cf. Lemma 2.43]{Einsiedler2011}. Now, consider a measurable set $B\subset X$ and define $B_j = \{ y \in B\setminus \Gamma : r(y) \leq j\}$ as well as
	\begin{equation*}
	A_j = S(B_j) = \bigcup_{k=1}^j (T^k(B)\cap A) \subset A \;\; \forall j \in \NN.
	\end{equation*}
	But since $\mu(A)< \infty$ by assumption, the Poincaré recurrence theorem (Lemma \ref{lem poincare recurrence theorem}) applies to $A_j$. Thus we can find measurable sets $\Gamma_j \subset A_j$ with measure zero, such that every point $x \in A_j\setminus \Gamma_j$ returns to $A_j$ infinitely often (via $S$). Now consider the set 
	\begin{equation*}
	F = B \cap \bigg( \Gamma \cup \bigcup_{j\in\IN} S^{-1}(\Gamma_j)  \bigg).
	\end{equation*}
	Then $\mu(F)=0$ and every point $y \in B\setminus F$ returns to $B$ infinitely often in the future. To see this, select $j \in \NN$ such that $r(y)\leq j$, i.e.\ $y \in B_j$. Then $x = S(y) \in A_j \setminus \Gamma_j$. Hence there exist infinitely many $k \in \NN$ so that $k \geq j$ and $S^k(x) \in A_j$. Let us fix one of these $k$. Then $S^k(x)=S(z)$ for some $z \in B_j$. So in total we have
	\begin{equation*}
	T^{r(z)}(z) = S(z) = S^k(x) = S^{k+1}(y) = T^{\sum_{j=1}^{k} r(S^j(y))}(y).
	\end{equation*}
	Now, since $\sum_{j=1}^{k} r(S^j(y)) \geq k+1 > j \geq r(z)$, this yields $T^m(y) = z \in B_j \subset B$, where $m = \sum_{j=1}^{k} r(S^j(y)) -r(z) \in \NN$.
\end{proof}

One way to construct such a set $A$ of finite measure is given by the next lemma \cite{kunze_ortega_ping_pong}. It is based on the function $W(\omega,r)$ introduced in Theorem \ref{thm escaping set nullmenge} and in fact is the only reason to assume the existence of $W$ in the first place.
\begin{lem} \label{lem finite measure set}
	Let $f:\mathcal{D}\subset \Omega \times (0,\infty) \to \Omega \times (0,\infty)$ be a measure-preserving embedding and suppose that there is a function $W=W(\omega,r)$ satisfying $W \in \mathcal{C}^1_\psi(\Omega \times (0,\infty))$, (\ref{W growth}) and (\ref{ineq adiabatic inv}). Let $(\epsilon_j)_{j\in\NN}$ and $(W_j)_{j\in\NN}$ be sequences of positive numbers with the properties $\sum_{j=1}^{\infty}\epsilon_j < \infty$, $\lim_{j\to\infty} W_j = \infty$ and $\lim_{j\to\infty} \epsilon_j^{-1} k(\frac{1}{4\gamma}W_j) = 0$. Denote
	\begin{equation}
	\mathcal{A} = \bigcup_{j\in\NN} \mathcal{A}_j, \;\; \mathcal{A}_j = \{(\omega,r)\in \Omega \times (0,\infty): \lvert W(\omega,r) - W_j \rvert \leq \epsilon_j \}.
	\end{equation}
	Then $\mathcal{A}$ has finite measure and every unbounded orbit of $f$ enters $\mathcal{A}$. More precisely, if $(\omega_0,r_0)\in \mathcal{U}$, where $\mathcal{U}$ is from \eqref{eq U unbounded orbits}, and if $(\omega_n,r_n)_{n\in\NN}$ denotes the forward orbit under $f$, then there is $K\in\NN$ so that $(\omega_K,r_K) \in \mathcal{A}$.
\end{lem}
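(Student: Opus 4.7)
The plan is to prove the two assertions separately. For the \textbf{finiteness of the measure}, I would use Fubini together with the lower bound $\p_r W\ge\beta$: for each fixed $\omega\in\Omega$ the map $r\mapsto W(\omega,r)$ is strictly increasing with slope at least $\beta$, so the slice $\{r\in(0,\infty):\lvert W(\omega,r)-W_j\rvert\le\epsilon_j\}$ is an interval of Lebesgue length at most $2\epsilon_j/\beta$. Integrating in $\omega$ gives $(\mu_\Omega\otimes\lambda)(\mathcal A_j)\le 2\epsilon_j/\beta$, and summing in $j$ yields
\[
(\mu_\Omega\otimes\lambda)(\mathcal A)\le\frac{2}{\beta}\sum_{j\in\NN}\epsilon_j<\infty.
\]

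For the \textbf{orbit statement}, I would set $w_n:=W(\omega_n,r_n)$; then \eqref{ineq adiabatic inv} says exactly $w_{n+1}\le w_n+k(r_n)$, so $w_n$ is quasi-monotone with controlled positive jumps. Using continuity of $W$ on the compact slab $\Omega\times\{r_\ast\}$ for some fixed $r_\ast>0$, together with $\beta\le\p_r W\le\delta$, I obtain affine two-sided estimates
\[
C_\ast+\beta(r-r_\ast)\le W(\omega,r)\le M_\ast+\delta(r-r_\ast)\quad\text{for all }\omega\in\Omega,\ r\ge r_\ast.
\]
The lower estimate combined with $\limsup_n r_n=\infty$ immediately gives $\limsup_n w_n=\infty$.

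Next, I would fix $j$ so large that (i)~$W_j>w_0$, (ii)~$k\bigl(\tfrac{1}{4\gamma}W_j\bigr)\le\epsilon_j$, which is possible by the asymptotic hypothesis on $\epsilon_j^{-1}k$, and (iii)~inverting the upper affine estimate at the level $w=W_j-\lVert k\rVert_\infty$ produces $r\ge\tfrac{1}{4\gamma}W_j$, which holds once $W_j$ dominates the fixed additive constants $\lVert k\rVert_\infty$, $M_\ast$ and $\delta r_\ast$ (and $\gamma$ is a suitable multiple of $\delta$). Define the first crossing time $N:=\min\{n\ge 1:w_n\ge W_j\}$, which is finite by (i) and the previous paragraph. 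By quasi-monotonicity,
\[
W_j\le w_N\le w_{N-1}+k(r_{N-1})<W_j+k(r_{N-1}),
\]
so in particular $w_{N-1}\ge w_N-k(r_{N-1})\ge W_j-\lVert k\rVert_\infty$; together with (iii) this forces $r_{N-1}\ge W_j/(4\gamma)$, and (ii) plus the monotonicity of $k$ then give $k(r_{N-1})\le\epsilon_j$. Substituting back, $W_j\le w_N<W_j+\epsilon_j$, so $(\omega_N,r_N)\in\mathcal A_j\subset\mathcal A$, as required.

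The main obstacle I foresee is the mildly \emph{self-referential} lower bound on $r_{N-1}$: it is derived from a lower bound on $w_{N-1}$ that itself depends on $k(r_{N-1})$. Because $k$ is globally bounded and the additive constants $\lVert k\rVert_\infty,M_\ast,\delta r_\ast$ are absorbed into the leading term $W_j$ for all sufficiently large $j$, this circularity resolves uniformly, and the rest is bookkeeping of the constants $\beta,\delta,\gamma$.
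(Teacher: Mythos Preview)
Your proposal is correct and follows essentially the same route as the paper. The finite-measure argument is identical (Fubini plus the slice bound $2\epsilon_j/\beta$), and for the orbit part the paper also derives two-sided linear control on $W$ (in the form $\beta/2\le W(\omega,r)/r\le 2\delta$ for $r>\hat r$), picks a large enough level $W_{j_0}$, takes the first crossing index, and bounds the jump by $k(r_{K-1})\le k(\tfrac{1}{4\delta}W_{j_0})\le\epsilon_{j_0}$. Your ``self-referential'' worry is a non-issue: you already resolve it exactly as the paper does, by replacing $k(r_{N-1})$ with the global bound $\lVert k\rVert_\infty$ in the estimate $w_{N-1}\ge W_j-\lVert k\rVert_\infty$; the only extra bookkeeping is to ensure $W_j-\lVert k\rVert_\infty>\max_\omega W(\omega,r_\ast)$ so that monotonicity forces $r_{N-1}>r_\ast$ before you invoke the affine upper bound (the paper makes this step explicit). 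Also, the $\gamma$ in the statement is simply $\delta$ in the paper's proof.
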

\begin{proof}
	First let us show that $\mathcal{A}$ has finite measure. By Fubini's theorem,
	\begin{equation*}
	(\mu_{\Omega}\otimes \lambda)(\mathcal{A}_j) = \int_\Omega \lambda(\mathcal{A}_{j,\omega}) \, d\mu_\Omega(\omega)
	\end{equation*}
	holds for the sections $\mathcal{A}_{j,\omega} = \{ r \in(0,\infty): (\omega,r) \in \mathcal{A}_j \}$. Now, consider the diffeomorphism $w_\omega:r \mapsto W(\omega,r)$. Its inverse $w_\omega^{-1}$ is Lipschitz continuous with constant $\beta^{-1}$, due to \eqref{W growth}. But then, $\mathcal{A}_{j,\omega} = w_\omega^{-1}( (W_j-\epsilon_j, W_j + \epsilon_j) )$ implies $\lambda(\mathcal{A}_{j,\omega})\geq 2\beta^{-1}\epsilon_j$. Thus in total we have
	\begin{equation*}
	(\mu_\Omega \otimes \lambda)(\mathcal{A}) \leq \sum_{j=1}^{\infty} (\mu_\Omega \otimes \lambda)(\mathcal{A}_j) \leq \sum_{j=1}^{\infty} \frac{2\epsilon_j}{\beta} < \infty.
	\end{equation*}
	Next we will prove the recurrence property. To this end, let $(\omega_0,r_0) \in \mathcal{U}$ be fixed and denote by $(\omega_n,r_n)$ the forward orbit under $f$. We will start with some preliminaries. Using \eqref{W growth} and the mean value theorem, we can find $\hat{r}$ such that
	\begin{equation} \label{ineq W durch r}
	\frac{\beta}{2} \leq \frac{W(\omega,r)}{r} \leq 2\delta \;\; \forall (\omega,r)\in \Omega \times (\hat{r},\infty).
	\end{equation}
	Furthermore, by assumption we can find an index $j_0\geq 2$ such that
	\begin{equation*}
	W_{j_0} > \max \{ W(\omega_1,r_1),  \lVert k \rVert_\infty + \max_{\omega \in \Omega} W(\omega,\hat{r}) , 2 \lVert k \rVert_\infty\} \;\; \text{and} \;\; k\bigg(\frac{1}{4\gamma}W_{j_0}\bigg) \leq \epsilon_{j_0}.
	\end{equation*}
	Moreover we have $\limsup_{n \to \infty} W(\omega_n,r_n) = \infty$: Due to $\limsup_{n \to \infty} r_n = \infty$, \eqref{W growth} implies
	\begin{equation*}
	W(\omega_n,r_n) \geq \beta (r_n-r_1) + W(\omega_n,r_1) 
	\end{equation*}
 	for $n$ sufficiently large. But then $\limsup_{n \to \infty} W(\omega_n,r_n) = \infty$ follows from the compactness of $\Omega$. Now, since $W(\omega_1,r_1)<W_{j_0}$ we can select the first index $K\geq 2$ such that $W(\omega_K,r_K) > W_{j_0}$. So in particular this means $W(\omega_{K-1},r_{K-1}) \leq W_{j_0}$. Since \eqref{ineq adiabatic inv} yields $W(\omega_K,r_K) \leq W(\omega_{K-1},r_{K-1}) + k(r_{K-1})$, we can derive the following inequality:
 	\begin{equation*}
	W(\omega_{K-1},r_{K-1}) \geq W(\omega_{K},r_{K}) -  \lVert k \rVert_\infty > W_{j_0} -  \lVert k \rVert_\infty \geq \max_{\omega \in \Omega} W(\omega,\hat{r}) \geq W(\omega_{K-1},\hat{r})
 	\end{equation*}
 	Then, the monotonicity of $w_{\omega_{K-1}}$ implies $r_{K-1} > \hat{r}$. Hence we can combine \eqref{ineq W durch r} with the previous estimate to obtain
 	\begin{equation*}
 	r_{K-1}\geq \frac{1}{2\delta} W(\omega_{K-1},r_{K-1}) \geq  \frac{1}{2\delta} (W_{j_0} - \lVert k \rVert_\infty) \geq \frac{1}{4\delta} W_{j_0}.
 	\end{equation*}
 	Finally, since $k(r)$ is decreasing, $W(\omega_{K},r_{K}) > W_{j_0} \geq W(\omega_{K-1},r_{K-1})$ yields
 	\begin{equation*}
 	\lvert 	W(\omega_{K},r_{K}) - W_{j_0} \rvert \leq 	W(\omega_{K},r_{K}) - 	W(\omega_{K-1},r_{K-1})  \leq k(r_{K-1}) \leq k\bigg(\frac{1}{4\delta} W_{j_0}\bigg) \leq \epsilon_{j_0},
 	\end{equation*}
 	which implies $(\omega_K,r_K) \in \mathcal{A}_{j_0}$.	
\end{proof}
Now, we are ready to prove the theorem.
\begin{proof}[Proof of Theorem \ref{thm escaping set nullmenge}]
	Consider the set
		\begin{equation*}
		\mathcal{U} = \{ (\omega_0,r_0) \in \mathcal{D}_\infty : \limsup_{n \to \infty} r_n = \infty \}.
		\end{equation*}
	We will assume that $\mathcal{U}\neq \emptyset$, since otherwise the assertion would be a direct consequence. 
	\underline{Step 1}: Almost all unbounded orbits are recurrent. We will prove the existence of a set $\mathcal{Z}\subset \mathcal{U}$ of measure zero such that if $(\omega_0,r_0)\in \mathcal{U}\setminus \mathcal{Z}$, then
		\begin{equation*}
		\liminf_{n\to\infty} r_n <\infty.
		\end{equation*}
	In particular, we would have $\mathcal{E} \subset \mathcal{Z}$. To show this, we consider the restriction $T=f\big|_\mathcal{U}:\mathcal{U}\to\mathcal{U}$. This map is well-defined, injective and, like $f$, measure-preserving. We will distinguish three cases:
		\begin{enumerate} [label=(\roman*)]
			\item $(\mu_\Omega \otimes \lambda)(\mathcal{U})=0$,
			\item $0<(\mu_\Omega \otimes \lambda)(\mathcal{U})<\infty$, and
			\item $(\mu_\Omega \otimes \lambda)(\mathcal{U})=\infty$.
		\end{enumerate}
	In the first case $\mathcal{Z}=\mathcal{U}$ is a valid choice. In case $(ii)$ we can apply the Poincaré recurrence theorem (Lemma \ref{lem poincare recurrence theorem}), whereas in case $(iii)$ the modified version of Dolgopyat (Lemma \ref{lem dolgopyat}) is applicable due to Lemma \ref{lem finite measure set}. Now, let us cover $\Omega\times \RN$ by the sets $\mathfrak{B}_j = \Omega \times (j-1,j+1)$ for $j \in \NN$. Then, for $\mathcal{B}_j = \mathfrak{B}_j \cap \mathcal{U}$ one can use the recurrence property to find sets $\mathcal{Z}_j\subset\mathcal{B}_j$ of measure zero such that every orbit $(\omega_n,r_n)_{n\in\NN}$ starting in $\mathcal{B}_j \setminus \mathcal{Z}_j$ returns to $\mathcal{B}_j$ infinitely often. But this implies $\liminf_{n\to\infty} r_n \leq r_0 +2<\infty$. Therefore, the set $\mathcal{Z}= \bigcup_{j\in\NN} \mathcal{Z}_j \subset \mathcal{U}$ has all the desired properties. \\
	\underline{Step 2}: The assertion is valid on the subgroup $\Sigma\subset \Omega$. 
	Since $\mathcal{E}\subset\mathcal{Z}$ by construction, the inclusion
		\begin{equation*}
		 E_\omega = (\psi_\omega \otimes \id)^{-1}(\mathcal{E}) \subset (\psi_\omega \otimes \id)^{-1}(\mathcal{Z})
		\end{equation*}
	holds for all $\omega \in \Omega$. To $j\in\IN$ we can consider the restricted flow
		\begin{equation*}
		\Phi_j : \Sigma \times [jS,(j+1)S) \to \Omega, \;\; \Phi_j(\sigma,t) = \sigma \cdot t = \psi_\sigma(t).
		\end{equation*}
	It is easy to verify that just like $\Phi=\Phi_0$ of Lemma \ref{lem decomp of haar measure} those functions are isomorphisms of measure spaces. In other words, $\Phi_j$ is bijective up to a set of measure zero, both $\Phi_j$ and $\Phi_j^{-1}$ are measurable, and for every Borel set $\mathcal{B} \subset \Omega$ we have
		\begin{equation} \label{eq measure preserving phi}
		\mu_\Omega(\mathcal{B}) = \frac{1}{S} (\mu_\Sigma \otimes \lambda) (\Phi^{-1}_j(\mathcal{B})).
		\end{equation}
	This clearly implies
		\begin{equation} \label{eq measure preserving phi times id}
		(\mu_\Omega \otimes \lambda)(B) = \frac{1}{S} (\mu_\Sigma \otimes \lambda^2)  (\Phi^{-1}_j\times \id) (B)
		\end{equation}
	for every Borel set $B \subset \Omega\times (0,\infty)$. Let
		\begin{equation*}
		C_j = \{ (\sigma,t,r) \in \Sigma\times [jS,(j+1)S) \times (0,\infty) : (\Phi_j(\sigma,t),r) \in \mathcal{Z} \}  = (\Phi_j^{-1}\times \id)(\mathcal{Z}).
		\end{equation*}
	Since $\mathcal{Z}$ has measure zero, \eqref{eq measure preserving phi times id} yields $(\mu_\Sigma\otimes \lambda^2) (C_j) = 0$. Next we consider the cross sections
		\begin{equation*}
		C_{j,\sigma} = \{ (t,r) \in  [jS,(j+1)S) \times (0,\infty) : (\sigma,t,r) \in C_j \}.
		\end{equation*}	
	Then, $\lambda^2(C_{j,\sigma})=0$ for $\mu_\Sigma$-almost all $\sigma \in \Sigma$ follows from Fubini's theorem. So for every $j\in\IN$ there is a set $M_j \subset \Sigma$ with $\mu_\Sigma(M_j)=0$ such that $\lambda^2(C_{j,\sigma}) = 0$ for all $\sigma \in \Sigma \setminus M_j$. Thus $M= \bigcup_{j\in\IN}M_j$ has measure zero as well and
		\begin{equation*}
		\lambda^2\bigg(\bigcup_{j\in\IN} C_{j,\sigma} \bigg) = 0
		\end{equation*}
	for all $\sigma \in \Sigma\setminus M$. But we have
		\begin{equation*}
		\bigcup_{j\in\IN} C_{j,\sigma} = \{ (t,r) \in \RN\times(0,\infty) : (\psi_\sigma (t),r)\in \mathcal{Z} \} = (\psi_\sigma \times \id)^{-1}(\mathcal{Z}),
		\end{equation*}
	and recalling that $E_\sigma \subset (\psi_\sigma \times \id)^{-1}(\mathcal{Z})$, we therefore conclude $\lambda^2(E_\sigma)=0$ for all $\sigma \in \Sigma \setminus M$. \\
	\underline{Step 3}: Concluding from $\Sigma$ to $\Omega$. If we denote by $T_s(t,r)=(t+s,r)$ the  translation in time, then clearly
		\begin{equation*}
		f_{\omega \cdot s} = T_{-s}\circ f_\omega \circ T_s \;\; \text{on} \;\; D_{\omega \cdot s}
		\end{equation*}
	holds for all $\omega \in \Omega$ and $s\in\RN$. But this implies $T_s(E_{\omega \cdot s}) = E_\omega$, since the identity above stays valid under iterations. In particular we have
		\begin{equation*}
		\lambda^2(E_{\omega \cdot s}) = \lambda^2(E_\omega), \;\;\forall\omega\in\Omega,s\in\RN.
		\end{equation*}
	Again, we consider the restricted flow $\Phi:\Sigma\times [0,S) \to \Omega$, $\Phi(\omega,t)=\omega \cdot t$. Using $M\subset \Sigma$ of Step 2 we define $Z_* = \Phi(M\times[0,S)) \subset \Omega$. Then, \eqref{eq measure preserving phi} and $\mu_\Sigma(M)=0$ imply that also $Z_*$ has measure zero. Now let $\omega \in \Omega \setminus Z_*$ be fixed and let $(\sigma,\tau) = \Phi^{-1}(\omega)$. Then $\sigma \in \Sigma\setminus M$ and $\sigma \cdot \tau = \omega$. Therefore, Step 2 implies
		\begin{equation*}
		\lambda^2(E_\omega)= \lambda^2(E_{\sigma \cdot \tau}) = \lambda^2(E_\sigma) = 0,
		\end{equation*}
	which proves the assertion.
\end{proof}

\section{Statement and proof of the main result}

We start with a rigorous description of the ping-pong map. To this end, let $p$ be a forcing such that
\begin{equation} \label{general forcing function}
p \in \mathcal{C}^2(\RN), \;\; 0<a\leq p(t) \leq b \;\;\forall t \in \RN, \;\; \lVert p\rVert_{\mathcal{C}^2} = \lVert p \rVert_\infty + \lVert \dot p \rVert_\infty + \lVert \ddot p \rVert_\infty < \infty.
\end{equation}
Now, we consider the map
\begin{equation*}
(t_0,v_0) \mapsto (t_1,v_1),
\end{equation*}
which sends a time $t_0$ of impact to the left plate $x=0$ and the corresponding velocity $v_0>0$ immediately after the impact to their successors $t_1$ and $v_1$ describing the subsequent impact to $x=0$. If we further denote by $\tilde{t}\in (t_0,t_1)$ the time of the particle's impact to the moving plate, then we can determine $\tilde{t}=\tilde{t}(t_0,v_0)$ implicitly through the equation
\begin{equation} \label{eq def t tilde v}
(\tilde{t}-t_0)v_0 = p(\tilde{t}),
\end{equation}
since this relation describes the distance that the particle has to travel before hitting the moving plate. With that we derive a formula for the successor map:
\begin{equation} \label{eq ping pong map t v}
t_1 = \tilde{t} + \frac{p(\tilde{t})}{v_1}, \;\; v_1 = v_0-2\dot{p}(\tilde{t})
\end{equation}
To ensure that this map is well defined, we will assume that
\begin{equation} \label{eq def v stern}
v_0 > v_* := 2 \max \{ \sup_{t\in\RN} \dot{p}(t),0 \}.
\end{equation}
This condition guarantees that $v_1$ is positive and also implies that there is a unique solution $\tilde{t}=\tilde{t}
(t_0,v_0) \in \mathcal{C}^1(\RN \times (v_*,\infty))$ to \eqref{eq def t tilde v}. Thus we can take $\RN\times (v_*,\infty)$ as the domain of the ping-pong map \eqref{eq ping pong map t v}. Now, we are finally ready to state the main theorem.
\begin{thm} \label{thm main theorem almost periodic}
	Assume $0<a<b$ and $P \in \mathcal{C}^2_\psi(\Omega)$ are such that
	\begin{equation} \label{eq forcing P bounds}
	a \leq P(\omega) \leq b \;\; \forall \omega \in \Omega.
	\end{equation}
	Consider the family $\{p_\omega\}_{\omega \in \Omega}$ of almost periodic forcing functions defined by
	\begin{equation} \label{eq a p forcing}
	p_\omega(t) = P(\omega + \psi(t)), \;\; t \in \RN.
	\end{equation}
	Let $v_{*}= 2\max \{ \max_{\varpi \in \Omega} \p_\psi P(\varpi),0\}$ and denote by
	\begin{equation*}
	E_\omega = \{ (t_0,v_0) \in \RN \times (v_*,\infty): (t_n,v_n)_{n\in\NN} \text{ is well defined and } \lim_{n\to\infty} v_n = \infty \}
	\end{equation*}
	the escaping set for the ping-pong map with forcing function $p(t) = 	p_\omega(t)$. Then, for almost all $\omega \in \Omega$, the set $E_\omega \subset \RN^2$ has Lebesgue measure zero.
\end{thm}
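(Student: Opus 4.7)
The plan is to verify the hypotheses of Theorem \ref{thm escaping set nullmenge} for a suitable lift of the ping-pong map. Following the representation formula, I would lift the successor map from $\RN \times (v_*, \infty)$ to $\Omega \times (v_*, \infty)$ by introducing $\omega_0 = \omega + \psi(t_0)$. The hitting time $\tilde t = t_0 + s$ with the moving plate is obtained by solving $sv_0 = P(\omega_0 + \psi(s))$, which has a unique positive solution by the implicit function theorem, since
\[
\p_s\bigl[s v_0 - P(\omega_0 + \psi(s))\bigr] = v_0 - \p_\psi P(\omega_0 + \psi(s)) \geq v_0 - v_*/2 > 0
\]
for $v_0 > v_*$. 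Writing $\omega_{\tilde t} := \omega_0 + \psi(s)$ produces a map of the form (\ref{embedding form}) with
\[
F(\omega_0, v_0) = s + \frac{P(\omega_{\tilde t})}{v_1}, \qquad G(\omega_0, v_0) = -2\,\p_\psi P(\omega_{\tilde t}), \qquad v_1 = v_0 + G.
\]
Continuity is inherited from $P \in \mathcal{C}^2_\psi(\Omega)$, and injectivity follows by running the collision dynamics backwards. To match the Lebesgue setting of Theorem \ref{thm escaping set nullmenge}, I would switch to the energy variable $r = v^2/2$ from the outset. Measure preservation can then be checked locally: parameterizing $\Omega$ near any $\omega_0$ by the flow coordinate $t \mapsto \omega_0 + \psi(t)$ exhibits $f$ in these coordinates as the planar successor map $f_{\omega_0}$, whose Jacobian in $(t,r)$ equals $1$ by the classical identity $v_1\,dt_1\,dv_1 = v_0\,dt_0\,dv_0$ of the Fermi-Ulam map.

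The heart of the argument is the construction of the adiabatic invariant. Guided by the classical action $2vp(t)$ of a ball in a slowly varying cavity, I would take
\[
W(\omega, r) = r \cdot P(\omega)^2,
\]
which belongs to $\mathcal{C}^1_\psi(\Omega \times (0, \infty))$ and satisfies $\p_r W = P(\omega)^2 \in [a^2, b^2]$, verifying (\ref{W growth}) with $\beta = a^2$ and $\delta = b^2$. To check (\ref{ineq adiabatic inv}), I would Taylor-expand in $\epsilon = 1/v_0$, using the $\mathcal{C}^2_\psi$ regularity of $P$ to derive
\[
s = P(\omega_0)\epsilon + P(\omega_0)\p_\psi P(\omega_0)\epsilon^2 + O(\epsilon^3), \quad F = 2P(\omega_0)\epsilon + O(\epsilon^2),
\]
and hence $P(\omega_1)^2 = P(\omega_0)^2 + 4 P(\omega_0)^2 \p_\psi P(\omega_0)\,\epsilon + O(\epsilon^2)$ and $r_1 = 1/(2\epsilon^2) - 2 \p_\psi P(\omega_0)/\epsilon + O(1)$. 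Multiplying out $r_1 P(\omega_1)^2$, the $O(\epsilon^{-2})$ term reproduces $W(\omega_0, r_0)$; the two $O(\epsilon^{-1})$ contributions cancel precisely because of the exponent $2$, and a careful bookkeeping shows the $O(1)$ contributions cancel as well. What remains is $W(f(\omega, r)) - W(\omega, r) = O(\epsilon) = O(1/\sqrt{r})$, uniformly in $\omega_0$ by compactness of $\Omega$. Hence (\ref{ineq adiabatic inv}) holds with $k(r) = C/\sqrt{r}$ for a constant $C$ depending only on $\|P\|_{\mathcal{C}^2_\psi(\Omega)}$, after replacing by a decreasing majorant if necessary.

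Once these bounds are in hand, Theorem \ref{thm escaping set nullmenge} applies in the $(\omega, r)$ coordinates and yields Lebesgue-measure-zero escaping sets for a.e.\ $\omega \in \Omega$; the smooth diffeomorphism $v \mapsto v^2/2$ transports this conclusion back to $\RN \times (v_*, \infty)$. I expect the main obstacle to be the third-order cancellation in the verification of (\ref{ineq adiabatic inv}): the $O(v_0)$ and $O(1)$ coefficients of $W(f(\omega, r)) - W(\omega, r)$ vanish only for the specific exponent $2$ in $W = rP^2$, and tracking the remainder to the required $O(1/\sqrt{r})$ decay demands full $\mathcal{C}^2_\psi$ control of $P$ and careful uniform estimates of the implicit function $s$ along the forward orbit.
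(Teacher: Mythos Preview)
Your overall strategy matches the paper's: pass to energy coordinates, lift the successor map to $\Omega\times(E^*,\infty)$ in the form \eqref{embedding form}, take $W(\omega,r)=P(\omega)^2 r$ as the adiabatic invariant, and invoke Theorem~\ref{thm escaping set nullmenge}. However, two of your verification steps contain genuine gaps. First, injectivity does \emph{not} follow ``by running the collision dynamics backwards'': Appendix~\ref{sec appendix ping pong} of the paper exhibits forcings for which two distinct initial data $(t_{0,1},v_0)\neq(t_{0,2},v_0)$ with $v_0>v_*$ are mapped to the same $(t_1,v_1)$. The paper instead restricts to $E_0>E_{**}$ with $E_{**}$ large and proves injectivity there by estimating the Jacobian of $f_\omega$ and applying the mean value theorem. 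Second, your measure-preservation argument does not make sense for general $\Omega$: the flow $t\mapsto\omega_0+\psi(t)$ is a dense one-parameter immersion, not a local chart, so there is no ``flow-coordinate'' parametrization of a neighborhood in $\Omega$ against which to test a Jacobian. The paper uses the global cross-section $\Sigma$ and the Haar-measure decomposition of Lemma~\ref{lem decomp of haar measure} to factor $f=(\Phi\times\id)\circ(\chi\times\id)\circ g\circ(\Phi^{-1}\times\id)$, where $g(\sigma,s,E)=(\sigma,f_\sigma(s,E))$ preserves $\mu_\Sigma\otimes\lambda^2$ because each planar $f_\sigma$ has a generating function, and $\chi\times\id$ preserves measure on strips of width~$S$.

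A smaller but real imprecision concerns the adiabatic-invariant estimate. With only $P\in\mathcal C^2_\psi(\Omega)$ you cannot push the Taylor expansion of $s$ to an $O(\epsilon^3)$ remainder, so the conclusion $W(f(\omega,r))-W(\omega,r)=O(r^{-1/2})$ is not justified. The paper's Lemma~\ref{lem ping pong adiabatic invariant} (via Lemma~\ref{lem invariante delta t und E}) gives instead
\[
|W(f(\omega_0,E_0))-W(\omega_0,E_0)|\le C\,\Delta(E_0),\qquad \Delta(E_0)=E_0^{-1/2}+\sup_{\|\varpi-\varpi'\|\le CE_0^{-1/2}}|\p_\psi^2 P(\varpi)-\p_\psi^2 P(\varpi')|,
\]
so the bound carries the modulus of continuity of $\p_\psi^2 P$. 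This still yields a decreasing $k$ with $k(r)\to 0$, which is all that \eqref{ineq adiabatic inv} demands; but the specific rate $C/\sqrt r$ would require an extra derivative.
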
 
\begin{rmk} \label{rmk v stern}
The notation  $v_{*}= 2\max \{ \max_{\varpi \in \Omega} \p_\psi P(\varpi),0\}$ is consistent with \eqref{eq def v stern}, since for every $\omega \in \Omega$ the set $\omega \cdot \RN$ lies dense in $\Omega$ and thus
\begin{equation*}
\sup_{t\in\RN} \dot{p}_\omega(t) = \sup_{t\in\RN}  \p_\psi P(\omega + \psi(t)) = \max_{\varpi \in \Omega} \p_\psi P(\varpi).
\end{equation*}
\end{rmk}
We will give some further preliminaries before starting the actual proof. First we note, that the ping-pong map $(t_0,v_0) \mapsto (t_1,v_1)$ is not symplectic. To remedy this defect, we reformulate the model in terms of time $t$ and energy $E=\frac{1}{2}v^2$. In these new coordinates the ping-pong map becomes 
\begin{align} \label{eq def ping pong map P}
\mathcal{P}:(&t_0,E_0) \mapsto (t_1,E_1), \\
&t_1 = \tilde{t} + \frac{p(\tilde{t})}{\sqrt{2 E_1}}, \;\; E_1 = E_0 - 2\sqrt{2 E_0}\dot{p}(\tilde{t}) + 2\dot{p}(\tilde{t})^2 = (\sqrt{E_0}-\sqrt{2}\dot{p}(\tilde{t}))^2,
\end{align}
where $\tilde{t}=\tilde{t}(t_0,E_0)$ is determined implicitly through the relation $\tilde{t} = t_0 + \frac{p(\tilde{t})}{\sqrt{2E_0}}$. This map is defined for $(t_0,E_0) \in \RN \times (\frac{1}{2}v_*^2,\infty)$. Since it has a generating function \cite[Lemma 3.7]{kunze_ortega_complete_orbits}, it is measure-preserving. Furthermore, from the inverse function theorem we can derive that $\mathcal{P}$ is locally injective. Note however, that in general $\mathcal{P}$ fails to be injective globally (see Appendix \ref{sec appendix ping pong}).\\
Now, we will demonstrate that $W(t_0,E_0)= p(t_0)^2 E_0$ acts as an adiabatic invariant for the ping-pong map. For this purpose we will cite the following lemma \cite[Lemma 5.1]{kunze_ortega_complete_orbits}:
\begin{lem} \label{lem invariante delta t und E}
	There is a constant $C>0$, depending only upon $\lVert p \rVert_{\mathcal{C}^2}$ and $a,b>0$ from \eqref{general forcing function}, such that
	\begin{equation*}
	\lvert p(t_1)^2 E_1 - p(t_0)^2 E_0\rvert \leq C \Delta(t_0,E_0) \;\; \forall (t_0,E_0)\in \RN\times (v_*^2/2,\infty),
	\end{equation*}
	where $(t_1,E_1)=\mathcal{P}(t_0,E_0)$ denotes the ping-pong map for the forcing $p$, and $\Delta(t_0,E_0) = E_0^{-1/2} + \sup\{ \lvert \ddot{p}(t) - \ddot{p}(s)\rvert : t,s \in [t_0-C,t_0+C],\lvert t-s \rvert \leq C E_0^{-1/2} \}$.
\end{lem}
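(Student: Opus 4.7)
The plan is to expand $p(t_1)^2 E_1 - p(t_0)^2 E_0$ by Taylor-expanding $p$ around the intermediate impact time $\tilde t$, and to exhibit exact cancellations that convert the leading $O(\sqrt{E_0})$ and $O(1)$ contributions into a remainder of size $O(E_0^{-1/2})$ plus a term governed by the oscillation of $\ddot p$. The quantity $p^2 E$ is the natural adiabatic invariant for the ping-pong map (essentially the action of the frozen system), so a cancellation of leading orders is to be expected; the bound in $\Delta(t_0,E_0)$ is simply the quantitative cost of having only $\mathcal{C}^2$-regularity. As preparation, from $\tau_0 := \tilde t - t_0 = p(\tilde t)/\sqrt{2E_0}$, $\tau_1 := t_1 - \tilde t = p(\tilde t)/\sqrt{2E_1}$, the energy formula $E_1 = (\sqrt{E_0}-\sqrt{2}\,\dot p(\tilde t))^2$ and $a\leq p\leq b$, I obtain $|\tau_i|\leq C_1 E_0^{-1/2}$ and $|\sqrt{E_1}-\sqrt{E_0}|\leq \sqrt{2}\,\|\dot p\|_\infty$, with $C_1$ depending only on $b$ and $v_*$. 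After enlarging $C$ once, the three points $t_0,\tilde t,t_1$ all lie in $[t_0-C,t_0+C]$ and are pairwise separated by at most $C E_0^{-1/2}$.

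\emph{Main step (Taylor expansion and exact cancellations).} Writing
\[
p(t_0) = p(\tilde t) - \dot p(\tilde t)\tau_0 + \tfrac{1}{2} \ddot p(\tilde t)\tau_0^2 + R_0, \qquad p(t_1) = p(\tilde t) + \dot p(\tilde t)\tau_1 + \tfrac{1}{2} \ddot p(\tilde t)\tau_1^2 + R_1,
\]
with integral-form remainders $R_i$ satisfying $|R_i|\leq \tfrac{1}{2}\tau_i^2 \sup_{|s-\tilde t|\leq|\tau_i|}|\ddot p(s)-\ddot p(\tilde t)|$, I square both identities, multiply by $E_i$, subtract, and exploit the two \emph{exact} identities
\[
\tau_i^2 E_i = \tfrac{1}{2} p(\tilde t)^2, \qquad E_1-E_0 = -\sqrt{2}\,\dot p(\tilde t)\bigl(\sqrt{E_0}+\sqrt{E_1}\bigr).
\]
The $O(1)$ contribution $\bigl[\dot p(\tilde t)^2 + p(\tilde t)\ddot p(\tilde t)\bigr](\tau_1^2 E_1 - \tau_0^2 E_0)$ vanishes identically by the first identity, while the two $O(\sqrt{E_0})$ contributions $p(\tilde t)^2(E_1-E_0)$ and $2p(\tilde t)\dot p(\tilde t)(\tau_1 E_1 + \tau_0 E_0) = \sqrt{2}\, p(\tilde t)^2 \dot p(\tilde t)(\sqrt{E_0}+\sqrt{E_1})$ add up to zero by the second identity.

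\emph{Bounding the leftovers.} Every term that survives the two cancellations either (i) carries an additional factor of $\tau_i$, so that its size is at most $C_2 \|p\|_{\mathcal{C}^2}^3 |\tau_i|^3 E_i = O(E_0^{-1/2})$, or (ii) involves one of the remainders $R_i$, in which case multiplication by the prefactor bounded by $2p(\tilde t)E_i = O(E_0)$ produces a contribution of size $O(\tau_i^2 E_i)\cdot\sup\{|\ddot p(t)-\ddot p(s)| : t,s\in[t_0-C,t_0+C],\,|t-s|\leq C E_0^{-1/2}\}$, which is at most a constant times the oscillation term in $\Delta(t_0,E_0)$. Summing yields the desired inequality with a constant $C$ depending only on $\|p\|_{\mathcal{C}^2}$, $a$ and $b$.

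The main obstacle is the algebraic bookkeeping of the cancellations: one must verify they are \emph{exact}, not just to leading order, because any slack of size $O(1)$ at this stage would survive in the final estimate and force the bound to be $O(1)$ rather than $O(E_0^{-1/2})$. This is precisely what singles out $p^2 E$ (rather than, say, $pE$ or $p^3 E$) as the correct adiabatic invariant — the two identities $\tau_i^2 E_i = \tfrac{1}{2} p(\tilde t)^2$ and $E_1-E_0 = -\sqrt{2}\,\dot p(\tilde t)(\sqrt{E_0}+\sqrt{E_1})$ conspire to make the cancellation precise only for this exponent.
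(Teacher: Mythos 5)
The paper does not actually prove this lemma: it is quoted verbatim from Kunze--Ortega \cite[Lemma 5.1]{kunze_ortega_complete_orbits}, so there is no in-paper proof to compare against. Your argument is a correct, self-contained derivation along the lines of the cited source. The two cancellation identities are exact as you claim: $\tau_i^2E_i=\tfrac12 p(\tilde t)^2$ follows directly from $\tau_i=p(\tilde t)/\sqrt{2E_i}$, and $\sqrt{E_1}-\sqrt{E_0}=-\sqrt2\,\dot p(\tilde t)$ follows from $E_1=(\sqrt{E_0}-\sqrt2\,\dot p(\tilde t))^2$ together with $\sqrt{E_0}>\sqrt2\,\dot p(\tilde t)$; expanding the squares one checks that the $O(\sqrt{E_0})$ terms $p^2(E_1-E_0)+2p\dot p(\tau_1E_1+\tau_0E_0)$ and the $O(1)$ term $(\dot p^2+p\ddot p)(\tau_1^2E_1-\tau_0^2E_0)$ vanish identically, and the surviving terms are exactly the ones you list, each of size $O(E_0^{-1/2})$ or controlled by the oscillation of $\ddot p$.

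One point deserves an explicit sentence. Your preliminary bound $\lvert\tau_1\rvert\leq C_1E_0^{-1/2}$ is false uniformly on all of $(v_*^2/2,\infty)$: when $v_*>0$ and $E_0\downarrow v_*^2/2$ with $\dot p(\tilde t)$ near its supremum, $\sqrt{E_1}=\sqrt{E_0}-\sqrt2\,\dot p(\tilde t)$ can be arbitrarily small and $\tau_1=p(\tilde t)/\sqrt{2E_1}$ blows up, so the three impact times need not stay within $O(E_0^{-1/2})$ of each other. This is harmless but must be dispatched: for $E_0$ below any fixed threshold $E_*$ (chosen so that, say, $\sqrt{E_1}\geq\tfrac12\sqrt{E_0}$ for $E_0\geq E_*$) the left-hand side is bounded by $b^2\bigl(E_0+(\sqrt{E_0}+\sqrt2\,\lVert\dot p\rVert_\infty)^2\bigr)$ while $\Delta(t_0,E_0)\geq E_0^{-1/2}\geq E_*^{-1/2}$, so the inequality holds trivially after enlarging $C$, and your Taylor argument is only needed for $E_0\geq E_*$. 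With that restriction stated, the proof is complete.
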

So far we have depicted the case of a general forcing function $p$. Now we will replace $p(t)$ by $p_\omega(t)$ from \eqref{eq a p forcing} and study the resulting ping-pong map. First we note that due to $P \in \mathcal{C}^2_\psi(\Omega)$ we have $p_\omega \in \mathcal{C}^2(\RN)$. Also $0<a\leq p_\omega(t) \leq b$ holds for all $\omega \in \Omega$ by assumption. Furthermore, since $\omega \cdot \RN$ lies dense in $\Omega$ it is
\begin{equation*}
\lVert {p}_\omega \rVert_\infty =  \lVert P \rVert_\infty, \;\;
\lVert \dot{p}_\omega \rVert_\infty =  \lVert \p_\psi P \rVert_\infty, \;\;
\lVert \ddot{p}_\omega \rVert_\infty =  \lVert \p_\psi^2 P \rVert_\infty.
\end{equation*}
In particular this means $\lVert p_\omega \rVert_{\mathcal{C}^2(\RN)} = \lVert P \rVert_{\mathcal{C}^2_\psi(\Omega)}$ for all $\omega \in \Omega$.  Therefore all considerations above apply with uniform constants. As depicted in Remark \ref{rmk v stern}, also the threshold $v_*= 2\max \{ \max_{\varpi \in \Omega} \p_\psi P(\varpi),0\}$ is uniform in $\omega$. Finally, since $\ddot{p}_\omega(t) = \p_\psi^2 P(\omega +\psi(t))$, the function $\Delta(t_0,E_0)$ can be uniformly bounded by
\begin{equation*}
\Delta(E_0) = E_0^{-1/2} + \sup\{ \lvert \p_\psi^2 P(\varpi) - \p_\psi^2 P(\varpi') \rvert : \varpi,\varpi'\in \Omega, \lVert \varpi - \varpi' \rVert \leq C E_0^{-1/2}  \}.
\end{equation*}
Hence, from Lemma \ref{lem invariante delta t und E} we obtain
\begin{lem} \label{lem ping pong adiabatic invariant}
	There is a constant $C>0$, uniform in $\omega \in \Omega$, such that
	\begin{equation*}
	\lvert p(t_1)^2 E_1 - p(t_0)^2 E_0\rvert \leq C \Delta(E_0) \;\; \forall (t_0,E_0)\in \RN\times (v_*^2/2,\infty),
	\end{equation*}
	where $(t_0,E_0)\mapsto(t_1,E_1)$ denotes the ping-pong map $\mathcal{P}$ for the forcing function $p_\omega(t)$.
\end{lem}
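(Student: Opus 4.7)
The plan is to deduce this uniform-in-$\omega$ estimate by specializing the general ping-pong bound of Lemma \ref{lem invariante delta t und E} (which was stated for arbitrary $\mathcal{C}^2$ forcings) to the almost periodic family $p_\omega$, and then verifying that each ingredient of that general estimate admits a bound independent of $\omega$ and of the base point $t_0$.

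First I would observe that all the ``input data'' for Lemma \ref{lem invariante delta t und E} are uniform in $\omega$. The excerpt has already recorded that, because $\omega+\psi(\RN)$ is dense in $\Omega$, one has $\lVert p_\omega\rVert_{\mathcal{C}^2(\RN)}=\lVert P\rVert_{\mathcal{C}^2_\psi(\Omega)}$ and $a\leq p_\omega(t)\leq b$ for every $\omega$. Hence Lemma \ref{lem invariante delta t und E}, applied with $p=p_\omega$, furnishes a constant $C>0$ that may be chosen independent of $\omega$, together with the bound
\begin{equation*}
\lvert p_\omega(t_1)^2 E_1-p_\omega(t_0)^2 E_0\rvert \leq C\,\Delta_\omega(t_0,E_0),
\end{equation*}
where $\Delta_\omega(t_0,E_0)=E_0^{-1/2}+\sup\{\lvert \ddot p_\omega(t)-\ddot p_\omega(s)\rvert:t,s\in[t_0-C,t_0+C],\,\lvert t-s\rvert\leq CE_0^{-1/2}\}$.

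The remaining task is to replace the local modulus $\Delta_\omega(t_0,E_0)$ by an expression $\Delta(E_0)$ that depends neither on $t_0$ nor on $\omega$. Here I would use the identity $\ddot p_\omega(t)=\p_\psi^2 P(\omega+\psi(t))$. Setting $\varpi=\omega+\psi(t)$ and $\varpi'=\omega+\psi(s)$, translation invariance gives $\varpi-\varpi'=\psi(t-s)$, an element of $\Omega$ that does not depend on $\omega$ or on $t_0$. Choosing on $\Omega$ a translation-invariant metric for which the one-parameter subgroup $\psi$ is (locally) controlled by its argument, one has $\lVert\varpi-\varpi'\rVert=\lVert\psi(t-s)\rVert\leq C'\lvert t-s\rvert$ for some $C'>0$ on any bounded scale. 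Enlarging the constant $C$ to absorb $C'$, this yields
\begin{equation*}
\Delta_\omega(t_0,E_0)\leq \Delta(E_0)=E_0^{-1/2}+\sup\{\lvert \p_\psi^2 P(\varpi)-\p_\psi^2 P(\varpi')\rvert:\varpi,\varpi'\in\Omega,\;\lVert\varpi-\varpi'\rVert\leq CE_0^{-1/2}\},
\end{equation*}
which is exactly the function written in the statement. Combining the two displays proves the lemma.

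The only nontrivial point, and the one I expect to be the main obstacle, is the comparison between the Euclidean gap $\lvert t-s\rvert$ on $\RN$ and the metric gap $\lVert\psi(t-s)\rVert$ on $\Omega$. On compact metrizable abelian groups one can always choose an invariant metric, but the continuous one-parameter subgroup $\psi$ need not be globally Lipschitz (e.g.\ on solenoidal $\Omega$). However, $\p_\psi^2 P$ is uniformly continuous on the compact group $\Omega$, so the bound in the definition of $\Delta(E_0)$ is still meaningful if one reinterprets the constraint $\lVert\varpi-\varpi'\rVert\leq CE_0^{-1/2}$ as coming from a continuity modulus of $\psi$ at $0$; the essential property one actually needs for the subsequent application of Theorem \ref{thm escaping set nullmenge}, namely $\Delta(E_0)\to 0$ as $E_0\to\infty$, then follows from the uniform continuity of $\p_\psi^2 P$ on $\Omega$ together with $\psi(r)\to 0$ as $r\to 0$.
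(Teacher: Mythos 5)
Your proposal is correct and follows essentially the same route as the paper: apply Lemma \ref{lem invariante delta t und E} with the $\omega$-uniform constant coming from $\lVert p_\omega\rVert_{\mathcal{C}^2(\RN)}=\lVert P\rVert_{\mathcal{C}^2_\psi(\Omega)}$ and $a\leq p_\omega\leq b$, then dominate the oscillation of $\ddot p_\omega=\p_\psi^2 P(\omega+\psi(\cdot))$ by a modulus of continuity of $\p_\psi^2 P$ on $\Omega$. The one point where you go beyond the paper --- the comparison between $\lvert t-s\rvert$ and the distance of $\varpi-\varpi'=\psi(t-s)$ from $0$ in $\Omega$, which the paper silently assumes when writing the constraint $\lVert\varpi-\varpi'\rVert\leq CE_0^{-1/2}$ --- is a genuine subtlety in the paper's formulation, and your resolution (all that is needed is some $t_0$- and $\omega$-independent majorant with $\Delta(E_0)\to 0$ as $E_0\to\infty$, which follows from uniform continuity of $\p_\psi^2 P$ on the compact group $\Omega$ together with $\psi(h)\to 0$ as $h\to 0$) is the right one.
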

Consider the equation
\begin{equation} \label{def tau}
\tau = \frac{1}{\sqrt{2E_0}}P(\omega_0 + \psi(\tau)).
\end{equation}
Since $P \in \mathcal{C}^1_\psi(\Omega)$ and $1-(2E_0)^{-1/2} \p_\psi P(\omega_0 + \psi(\tau)) \geq \frac{1}{2}>0$ for $E_0 > \frac{1}{2}v_*^2$, equation (\ref{def tau}) can be solved implicitly for $\tau=\tau(\omega_0,E_0)\in \mathcal{C}(\Omega \times (v_*^2/2,\infty))$ (cf. \cite{Biasi} for a suitable implicit function theorem). For $\omega \in \Omega$ and $t_0\in \RN$ one can consider (\ref{def tau}) with $\omega_0 = \omega + \psi(t_0)$. Then $P\in \mathcal{C}^1_\psi(\Omega)$ and the classical implicit function theorem yield $\tau \in \mathcal{C}^1_\psi(\Omega\times(v_*^2/2,\infty))$. Moreover, comparing this to the definition of $\tilde{t}$, we observe the following relation:
\begin{equation} \label{rel tilde t tau}
\tilde{t}(t_0,E_0) = t_0 + \tau(\omega + \psi(t_0), E_0).
\end{equation}
Now we will give the proof of the main theorem, in which we will link the ping-pong map corresponding to $p_\omega(t)$ to the setup of Section \ref{sec a theorem about escaping sets}.

\begin{proof}[Proof of Theorem \ref{thm main theorem almost periodic}]
Let $\mathcal{D} = \Omega \times (E^*,\infty)$, where $E^* = \max\{ \frac{1}{2}v_*^2,E_{**} \}$ and $E_{**}$ will be determined below. Consider $f:\mathcal{D}\subset \Omega \times (0,\infty) \to \Omega \times (0,\infty), f(\omega_0,E_0) = (\omega_1,E_1)$, given by
\begin{equation*}
\omega_1 = \omega_0 + \psi(F(\omega_0,E_0)), \;\; E_1 = E_0 + G(\omega_0,E_0),
\end{equation*}
where
\begin{align*}
&F(\omega_0,E_0) = \bigg(\frac{1}{\sqrt{2E_0}} + \frac{1}{\sqrt{2E_1}}\bigg) P(\omega_0 + \psi(\tau)),\\
&G(\omega_0,E_0) = -2\sqrt{2E_0} \p_\psi P(\omega_0 + \psi(\tau)) + 2\p_\psi P(\omega_0 + \psi(\tau))^2,
\end{align*}
for $\tau = \tau(\omega_0,E_0)$. Then $f$ has special form \eqref{embedding form} and therefore we can study the family $\{ f_\omega \}_{\omega \in \Omega}$ of planar maps defined by \eqref{planar maps}. But plugging \eqref{rel tilde t tau} into the definition of $\mathcal{P}$ shows, that $f_\omega$ is just the ping-pong map $\mathcal{P}$ in the case of the forcing $p_\omega(t)$. Independently of $\omega$, these maps are defined on $D_\omega = (\psi_\omega\times \id)^{-1}(\mathcal{D}) = \RN\times (E^*,\infty)$.

Let us show that $f$ is injective on $\Omega\times(E_{**},\infty)$, if $E_{**}$ is sufficiently large. Therefore suppose $f(\omega_0,E_0) = (\omega_1,E_1) = f(\tilde{\omega}_0,\tilde{E}_0)$. Since $\omega_0 + \iota(F(\omega_0,E_0)) = \tilde{\omega}_0 + \iota(F(\tilde{\omega}_0,\tilde{E}_0))$ there is $\omega\in \Omega$ and $t_0,\tilde{t}_0\in \RN$ such that $\omega_0 = \omega +\psi(t_0)$ and $\tilde{\omega}_0 = \omega +\psi(\tilde{t}_0)$. Implicit differentiation yields $\p_{t_0} \tau(\omega + \psi(t_0),E_0) = \mathcal{O}(E_0^{-1/2})$ and $\p_{E_0} \tau(\omega + \psi(t_0),E_0) = \mathcal{O}(E_0^{-3/2})$. Moreover, $E_1=\mathcal{O}(E_0)$ implies
\begin{equation*}
D_{f_\omega}(t_0,E_0) = \begin{pmatrix}
1 + \mathcal{O}(E_0^{-1/2}) & \mathcal{O}(E_0^{-3/2}) \\
\mathcal{O}(E_0^{1/2}) & 1 + \mathcal{O}(E_0^{-1/2})
\end{pmatrix}
\end{equation*}
for the Jacobian matrix of $f_\omega$. Throughout this paragraph $C$ will denote positive constants depending on $E_{**}$ and $\lVert P \rVert_{\mathcal{C}^2_\psi(\Omega)}$, which will not be further specified. Without loss of generality we may assume $E_0 \leq \tilde{E}_0$. Then, applying the mean value theorem yields $\lvert t_0 - \tilde{t}_0 \rvert \leq C E_0^{-1/2} \lvert t_0 - \tilde{t}_0 \rvert + C E_0^{-3/2} \lvert E_0 - \tilde{E}_0 \rvert$ and $\lvert E_0 - \tilde{E}_0 \rvert \leq C \tilde{E}_0^{1/2} \lvert t_0 - \tilde{t}_0 \rvert   + C E_0^{-1/2} \lvert E_0 - \tilde{E}_0 \rvert$, provided $E_{**}$ is sufficiently big. Thus, for large $E_{**}$ we get $\lvert t_0 - \tilde{t}_0 \rvert \leq  C E_0^{-3/2} \lvert E_0 - \tilde{E}_0 \rvert$ and $\lvert E_0 - \tilde{E}_0 \rvert \leq C \tilde{E}_0^{1/2} \lvert t_0 - \tilde{t}_0 \rvert$. Now, combining these inequalities gives us $\lvert t_0 - \tilde{t}_0 \rvert \leq C E_0^{-3/2} \tilde{E}_0^{1/2} \lvert t_0 - \tilde{t}_0 \rvert$. But since $E_1 =\mathcal{O}(E_0)$ and also $\tilde{E}_0 =\mathcal{O}(E_1)$, we can conclude $\lvert t_0 - \tilde{t}_0 \rvert \leq C E_0^{-1}  \lvert t_0 - \tilde{t}_0 \rvert$. In turn, this implies $t_0 = \tilde{t}_0$ and $E_0 = \tilde{E}_0$ for $E_{**}$ sufficiently large, which proves the injectivity of $f_\omega$ and $f$.

Next we want to show that $f$ is also measure-preserving. To this end, consider the maps $g:\Sigma\times [0,S) \times (E^*,\infty) \to \Sigma\times [0,\infty) \times (0,\infty)$ defined by
\begin{equation*}
g(\sigma,s,E) = (\sigma, f_{\sigma}(s,E))
\end{equation*}
and $\chi: \Sigma\times [0,\infty)  \to \Sigma\times [0,S) , \;\; \chi(\sigma,t) = \Phi^{-1}(\sigma\cdot t)$ from \eqref{eq def chi}. Then, the identity
\begin{equation*}
f = (\Phi\times \id) \circ (\chi\times \id) \circ g \circ (\Phi^{-1}\times \id)
\end{equation*}
 holds on $\mathcal{D}$. This can be illustrated as follows:
\begin{equation*}
\begin{tikzcd}
(\omega_0,E_0) \arrow[r, mapsto, "f"] \arrow[d, mapsto, "\Phi^{-1}\times \id"']
& (\omega_1,E_1)  \\
(\sigma_0,s_0,E_0) \arrow[r, mapsto, "g"]  
& (\sigma_0,s_1,E_1)  \arrow[r, mapsto, "\chi\times \id"]
& (\sigma_1,s_1',E_1) \arrow[lu, mapsto, "\Phi \times \id"']
\end{tikzcd}
\end{equation*}
Recalling Lemma \ref{lem decomp of haar measure} and the fact that $f_\omega$ has a generating function, it suffices to show that $\chi\times\id$ preserves the measure of any Borel set $\mathcal{B} \subset g \left( (\Phi^{-1}\times \id)(\mathcal{D})\right)$. Therefore, consider the sets 
\begin{equation*}
\mathcal{B}_k = \mathcal{B} \cap \left(\Sigma\times [(k-1)S,kS) \times (0,\infty)\right), \;\; k\in\NN.
\end{equation*}
Then we have
\begin{equation*}
(\mu_{\Sigma}\otimes \lambda^2) \left((\chi\times \id)(\mathcal{B}_k )\right) = (\mu_{\Sigma}\otimes \lambda^2) \left(\mathcal{B}_k\right),
\end{equation*}
as depicted in Section \ref{sec decomp haar measure}. Moreover, the injectivity of $f$ implies the injectivity of $\chi\times \id$ on $\mathcal{B}$ and thus the sets $(\chi\times \id)(\mathcal{B}_k)$ are mutually disjoint. Since $\mathcal{B}=\cup_{k\in\NN} \mathcal{B}_k$, this yields $(\mu_{\Sigma}\otimes \lambda^2) \left((\chi\times \id)(\mathcal{B})\right) =  (\mu_{\Sigma}\otimes \lambda^2) \left(\mathcal{B}\right)$.

Finally, we need to find a function $W\in \mathcal{C}^1_\psi(\Omega\times (0,\infty))$ such that \eqref{W growth} and \eqref{ineq adiabatic inv} are verified. For this define
\begin{equation*}
W(\omega_0,E_0) = P(\omega_0)^2 E_0.
\end{equation*}
Conditions \eqref{W growth} clearly holds if we take $\beta=a^2$ and $\delta = b^2$ with $a,b$ from \eqref{eq forcing P bounds}. Moreover, the definition of $f$ yields
\begin{align*}
W(f(\omega_0,E_0)) - W(\omega_0,E_0) &= P(\omega_1)^2 E_1 - P(\omega_0)^2 E_0 \\
&= P(\omega_0 + \psi(F(\omega_0,E_0)))^2 E_1 - P(\omega_0)^2 E_0 \\
&= p_{\omega_0}(F(\omega_0,E_0))^2 E_1 - p_{\omega_0}(0)^2 E_0.
\end{align*}
Now let $t_0=0$ and $(t_1,E_1)= f_{\omega_0}(t_0,E_0)$. Then $t_1 = F(\omega_0,E_0)$ and thus Lemma \ref{lem ping pong adiabatic invariant} yields
\begin{equation*}
W(f(\omega_0,E_0)) - W(\omega_0,E_0) = p_{\omega_0}(t_1)^2 E_1 - p_{\omega_0}(t_0)^2 E_0 \leq C \Delta(E_0),
\end{equation*}
where $C>0$ is uniform in $\omega_0$. But then taking $k(E_0) = C\Delta(E_0)$ proves \eqref{ineq adiabatic inv}, since $\lim_{r\to\infty}\Delta(r) = 0$ follows from $\p_\psi^2P \in \mathcal{C}(\Omega)$. 

Now we have validated all conditions of Theorem \ref{thm escaping set nullmenge} for the map $f:\mathcal{D} \to \Omega\times (0,\infty)$. Applying it yields $\lambda^2(\hat{E}_\omega)=0$ for almost all $\omega \in \Omega$, where $\hat{E}_\omega = \{ (t_0,E_0) \in \hat{D}_{\omega,\infty}: \lim_{n\to\infty} E_n = \infty \}$ and $\hat{D}_{\omega,\infty}$ is defined as in Section \ref{sec a p successor maps}. This can be translated back to the original coordinates $(t,v) = (t,\sqrt{2E})$: Let us denote by $g_\omega$ the ping-pong map $(t_0,v_0)\mapsto (t_1,v_1)$ from \eqref{eq ping pong map t v} for the forcing $p(t)=p_\omega(t)$ and let
\begin{equation*}
\tilde{D}_\omega = \RN \times (\sqrt{2E^*},\infty), \;\;\; \tilde{D}_{\omega,1} = \tilde{D}_\omega, \;\;\; \tilde{D}_{\omega,n+1} = g_\omega^{-1}(\tilde{D}_{\omega,n}), \;\;\; \tilde{D}_{\omega,\infty} = \bigcap_{n=1}^{\infty} \tilde{D}_{\omega,n}.
\end{equation*}
Then $\lambda^2(\tilde{E}_\omega) = 0$ for almost all $\omega \in \Omega$, where $\tilde{E}_\omega = \{ (t_0,v_0) \in \tilde{D}_{\omega,\infty}: \lim_{n\to\infty} v_n = \infty \}$. Now, consider the escaping set $E_\omega$ from the theorem and take $(t_0,v_0) \in E_\omega$. Since $\lim_{n\to\infty} v_n = \infty$, there is $n_0\in \NN$ such that $v_n > \sqrt{2E^*}$ for all $n\geq n_0$. But this just means $(t_n,v_n) \in \tilde{E}_\omega$ for $n\geq n_0$. In particular, this implies $E_\omega \subset \bigcup_{n\in\NN} g_\omega^{-n}(\tilde{E}_\omega)$. Considering that $g_\omega$ is area-preserving, this proves the assertion: $\lambda^2(E_\omega)=0$ for almost all $\omega \in \Omega$.
\end{proof}

\begin{rmk}
	Let us also point out that the framework developed in the present paper can be applied to a lot of other dynamical systems. A famous example of such a system is given by the so called Littlewood boundedness problem. There, the question is whether solutions of an equation $\ddot{x} + G'(x) = p(t)$ stay bounded in the $(x,\dot{x})$-phase space if the potential $G$ satisfies some superlinearity condition. In \cite{schliessauf_littlewood} it is shown that the associated escaping set $E$ typically has Lebesgue measure zero for $G'(x) = \lvert x \rvert^{\alpha-1}x$ with $\alpha \geq 3$ and a quasi-periodic forcing function $p(t)$. Indeed, this result can be improved to the almost periodic case in a way analogous to the one presented here (for the ping-pong problem).
\end{rmk}

\section{Appendix}

\subsection{The hull of an almost periodic function}

\begin{lem} \label{lem uniform limit}
	Let $u\in \mathcal{C}(\RN)$ be almost periodic. If the sequences $\{ u_{\tau_n} \}, \{ u_{s_n} \}$ are uniformly convergent, then $\{ u_{\tau_n-s_n} \}$ is uniformly convergent as well. 
\end{lem}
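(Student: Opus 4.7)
The plan is to show that $\{u_{\tau_n-s_n}\}$ is uniformly Cauchy on $\RN$; since the space of bounded continuous functions equipped with the sup norm is complete, uniform convergence will follow. Let $\varepsilon>0$. By hypothesis, both $\{u_{\tau_n}\}$ and $\{u_{s_n}\}$ are uniformly Cauchy, so there exists $N\in\NN$ such that
\begin{equation*}
\sup_{t\in\RN} \lvert u(t+\tau_n) - u(t+\tau_m)\rvert < \varepsilon/2 \quad\text{and}\quad \sup_{t\in\RN} \lvert u(t+s_n) - u(t+s_m)\rvert < \varepsilon/2
\end{equation*}
whenever $n,m\geq N$.

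The key step is to insert a convenient intermediate translate and exploit the translation-invariance of the sup norm. For any $t\in\RN$ I would write
\begin{equation*}
\lvert u(t+\tau_n-s_n) - u(t+\tau_m-s_m)\rvert \leq \lvert u(t+\tau_n-s_n) - u(t+\tau_m-s_n)\rvert + \lvert u(t+\tau_m-s_n) - u(t+\tau_m-s_m)\rvert.
\end{equation*}
For the first term, the change of variables $t' = t - s_n$ gives $\sup_t \lvert u(t+\tau_n-s_n) - u(t+\tau_m-s_n)\rvert = \sup_{t'} \lvert u(t'+\tau_n) - u(t'+\tau_m)\rvert < \varepsilon/2$ by the first Cauchy estimate. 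For the second term, setting $a = t+\tau_m$ turns $\sup_t \lvert u(t+\tau_m-s_n) - u(t+\tau_m-s_m)\rvert$ into $\sup_a \lvert u(a-s_n) - u(a-s_m)\rvert$, and after a further shift $a = b+s_n$ this equals $\sup_b \lvert u(b) - u(b+s_m-s_n)\rvert = \sup_t \lvert u(t+s_n) - u(t+s_m)\rvert < \varepsilon/2$ by the second Cauchy estimate.

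Combining the two estimates yields $\sup_{t\in\RN}\lvert u_{\tau_n-s_n}(t) - u_{\tau_m-s_m}(t)\rvert < \varepsilon$ for all $n,m\geq N$, proving the Cauchy property and hence uniform convergence. I do not expect any serious obstacle here: the only subtlety is choosing the right intermediate translate and verifying that the substitutions used under the supremum are bijections of $\RN$, which they plainly are since every $\tau_n$, $s_n$ is a fixed real number for each fixed $n$.
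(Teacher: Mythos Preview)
Your proof is correct and follows essentially the same route as the paper: both arguments show that $\{u_{\tau_n-s_n}\}$ is uniformly Cauchy by inserting the intermediate translate $u(t+\tau_m-s_n)$ and bounding the two resulting pieces via the Cauchy property of $\{u_{\tau_n}\}$ and $\{u_{s_n}\}$, using translation invariance of the sup norm. The paper merely writes out the two inequalities at specific arguments rather than phrasing them as substitutions under a supremum, but the content is identical.
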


\begin{proof}
	Let $\epsilon>0$ be given. Since $\{ u_{\tau_n} \}, \{ u_{s_n} \}$ are Cauchy sequences, there exists $N\in\NN$ such that for $n,m\geq N$ we have
	\begin{equation*}
	\lvert u_{\tau_n}(-s_n+t) - u_{\tau_m}(-s_n+t) \rvert < \frac{\epsilon}{2}
	\end{equation*}
	and
	\begin{equation*}
	\lvert u_{s_n}(\tau_m -s_n - s_m+t) - u_{s_m}(\tau_m -s_n - s_m +t) \rvert < \frac{\epsilon}{2},
	\end{equation*}
	where $t\in \RN$ is arbitrary. Together this yields
	\begin{equation*}
	\lvert u(\tau_n -s_n +t) - u(\tau_m - s_m +t) \rvert < \epsilon.
	\end{equation*}
	for all $n,m\geq N$ and $t\in\RN$, and thus proves the assertion.
\end{proof}

\subsection{Ping-pong map} \label{sec appendix ping pong}
The map $\mathcal{P}$ from \eqref{eq def ping pong map P} can fail to be injective globally. For this, suppose there are $\tilde{t}_1,\tilde{t}_2\in \RN$ with $\tilde{t}_1<\tilde{t}_2$ such that the derivative $\dot{p}(t)$ reaches its maximum at both $\tilde{t}_1$ and $\tilde{t}_2$, and moreover $p(\tilde{t}_1)>p(\tilde{t}_2)$. For the sake of simplicity, let us consider the original coordinates $(t,v)$. Let $v_1>0$ be the unique number so that $\tilde{t}_1 + \frac{p(\tilde{t}_1)}{v_1} = \tilde{t}_2 + \frac{p(\tilde{t}_2)}{v_1}$. Now, we define $v_0 = v_1 + 2\dot{p}(\tilde{t}_1) = v_1 + 2\dot{p}(\tilde{t}_2)$ and $t_{0,i} = \tilde{t}_i - \frac{p(\tilde{t}_i)}{v_1}$ for $i=1,2$. From $p(\tilde{t}_1)>p(\tilde{t}_2)$ we can derive $t_{0,1}<t_{0,2}$. But $v_0 = v_1 + 2\sup_{t\in\RN}\dot{p}(t) > v_*$ implies that $(t_{0,i},v_0)$ are in the domain of $\mathcal{P}$ and furthermore $\mathcal{P}(t_{0,i},v_0) = (t_1,v_1)$, where $t_1 = \tilde{t}_1 + \frac{p(\tilde{t}_1)}{v_1}$.

\bibliographystyle{alpha}
\bibliography{bib2}

\end{document}